\def\ifundefined#1{\expandafter\ifx\csname#1\endcsname\relax}
\theoremstyle{plain}
\newtheorem{theorem}{Theorem}[section]
\newtheorem{lemma}[theorem]{Lemma}
\newtheorem{corollary}[theorem]{Corollary}
\newtheorem{theorem*}{Theorem}
\theoremstyle{definition}
\numberwithin{equation}{section}
\mathchardef\sa="303A
\renewcommand{\epsilon}{\varepsilon}
\newcommand{\ip}[2]{\ensuremath{( {#1}, \, {#2} )}}
\newcommand{\Lip}[2]{\ensuremath{\langle {#1}, \, {#2} \rangle }}
\newcommand{\lm}[1][m]{\ensuremath{\lambda_{#1}}}
\newcommand{\um}[1][m]{\ensuremath{\mu_{#1}}}
\newcommand{\tk}[1][k]{\ensuremath{\tau_{#1}}}
\newcommand{\eps}{\ensuremath{\epsilon}}
\newcommand{\Jm}[1][m]{\ensuremath{J_{#1}}}
\newcommand{\kk}[1][k]{\ensuremath{\kappa_{#1}}}
\newcommand{\Llm}[1][m]{\ensuremath{\Lambda_{#1}(\Oe)}}
\newcommand{\Lum}[1][m]{\ensuremath{\Lambda_{#1}(\Ot)}}
\newcommand{\vp}{\ensuremath{\varphi}}
\newcommand{\vs}{\ensuremath{\psi}}
\newcommand{\Pvp}{\ensuremath{\Phi}}
\newcommand{\Vp}{\ensuremath{\Psi_{\vp}}}
\newcommand{\Vs}{\ensuremath{\Psi_{\vs}}}
\newcommand{\Rvp}{\ensuremath{R_{\varphi}}}
\newcommand{\Rvpo}{\ensuremath{R_{\varphi}^{(1)}}}
\newcommand{\Rvpt}{\ensuremath{R_{\varphi}^{(2)}}}
\newcommand{\Rvs}{\ensuremath{R_{\psi}}}
\renewcommand{\S}{\ensuremath{S}}
\newcommand{\T}{\ensuremath{T}}
\newcommand{\Se}{\ensuremath{S_1}}
\newcommand{\St}{\ensuremath{S_2}}
\newcommand{\Sj}{\ensuremath{S_j}}
\newcommand{\dist}{\ensuremath{\mbox{dist}}}
\newcommand{\rr}{\ensuremath{\rho}}
\newcommand{\N}{\ensuremath{N}}
\renewcommand{\H}{\ensuremath{H}}
\newcommand{\He}{\ensuremath{H_1}}
\newcommand{\Ht}{\ensuremath{H_2}}
\newcommand{\Hj}[1][j]{\ensuremath{H_{j}}}
\newcommand{\Xj}[1][j]{\ensuremath{X_{#1}}}
\newcommand{\Om}{\ensuremath{\Omega}}
\newcommand{\Oe}{\ensuremath{\Omega_1}}
\newcommand{\Ot}{\ensuremath{\Omega_2}}
\newcommand{\Oet}{\ensuremath{\Omega_{12}}}
\newcommand{\Ge}{\ensuremath{\Gamma_1}}
\newcommand{\Gt}{\ensuremath{\Gamma_2}}
\newcommand{\Get}{\ensuremath{\Gamma_{12}}}
\newcommand{\Dd}{\ensuremath{D}}
\newcommand{\C}{\ensuremath{\mathcal{C}}}
\newcommand{\Ck}{\ensuremath{\mathcal{C}_k}}
\newcommand{\ndiv}{\ensuremath{\partial_{\nu}}}
\newcommand{\nablat}{\ensuremath{\nabla_{\tau}}}
\title{Hadamard Asymptotics for Eigenvalues of the Dirichlet Laplacian}
\author{Vladimir Kozlov}
\author{Johan Thim\footnote{Corresponding author: {\tt johan.thim@liu.se}}}
\affil{\small Department of Mathematics, University of Link\"{o}ping, Link\"{o}ping, Sweden}
\date{\today}
\begin{document}

\maketitle

\begin{abstract}
\noindent
This paper is dedicated to the classical Hadamard formula for asymptotics of eigenvalues
of the Dirichlet-Laplacian under perturbations of the boundary. We prove that the Hadamard formula
still holds for~$C^1$-domains with~$C^1$-perturbations. We also derive an optimal estimate for the
remainder term in the~$C^{1,\alpha}$-case. Furthermore, if the boundary is merely Lipschitz, we show
that the Hadamard formula is not valid.
 
\bigskip

\noindent
{\bf Keywords}: Hadamard formula; Domain variation; Asymptotics of eigenvalues; Dirichlet problem

\medskip

\noindent
{\bf MSC2010}: 35P05, 47A75, 49R05, 47A55

\end{abstract}


\section{Introduction}\label{SK1}
The question of how eigenvalues change when the domain is slightly perturbed is a classical problem
probably going at least as far back as Rayleigh~\cite{Rayleigh1877}, who studied eigenvalues and domain perturbation
in connection with acoustics as early as in the nineteenth century.
The approach given in this article owes to results by Hadamard~\cite{Hadamard1968}, who in the early 20th century
studied perturbations of domains with smooth boundary,
where the perturbed domain~$\Omega_{\epsilon}$ is represented by~$x_{\nu} = \epsilon h(x')$
where~$x' \in \partial \Omega_0$,~$x_{\nu}$ is the signed distance to the boundary ($x_{\nu} < 0$ for~$x \in \Omega_0$),~$h$ is a smooth
function,~$\epsilon$ is a small parameter, and~$\Omega_0$ is the reference domain. Hadamard's result
for the first eigenvalue of the Dirichlet-Laplacian is given by
\begin{equation}\label{A4a}
\Lambda(\Omega_{\epsilon}) = \Lambda(\Omega_0) -
	\epsilon \int_{\partial \Omega_0} h  |\nabla \vp|^2  dS
	+ o(\epsilon),
\end{equation}
where $dS$ is the surface measure on~$\partial \Omega_0$ and~$\vp$ is an eigenfunction corresponding
to~$\Lambda(\Omega_0)$ such that~$\| \vp \|_{L^2(\Omega_0)} = 1$.
It is worth noting, that the problem of how eigenvalues change when the domain is perturbed, is closely
related to shape optimization. We refer here
to Henrot~\cite{Henrot2006}, and Soko\l{}owski and Zol\'esio~\cite{Sokolowski1992}, and references found therein.

The aim of this article is to  find minimal assumptions on the smoothness of the boundary when the 
Hadamard formula is still valid.
A large quantity of studies of the Dirichlet problem already exists in the literature; see, for instance,
Grinfeld~\cite{Grinfeld2010}, Henrot~\cite{Henrot2006}, Kozlov~\cite{Kozlov2013,Kozlov2006},
Kozlov and Nazarov~\cite{Kozlov2012}, and references found therein. In this article,
we present an asymptotic formula of Hadamard type for perturbations in the case when the
domains are of class~$C^1$ or~$C^{1,\alpha}$, respectively. 
The first class of domains is optimal for validity of the Hadamard formula. For the second class of domains,
we give an optimal estimate of the remainder term.
Let~$\Oe$ and~$\Ot$ be bounded domains in~$\mathbf{R}^n$ with boundaries~$\Ge$ and~$\Gt$, respectively.
We consider the spectral problems
\begin{equation}
\label{eq:maineq}
\left\{
\begin{aligned}
 -\Delta  u &= \Llm[{}] u   & & \mbox{in } \Oe,\\
u           &=  0 & & \mbox{on } \Ge
\end{aligned} \right.
\end{equation}
and
\begin{equation}
\label{eq:maineq2}
\left\{
\begin{aligned}
 -\Delta v &= \Lum[{}] v  & & \mbox{in } \Ot,\\
v          &=  0 & & \mbox{on } \Gt.
\end{aligned} \right.
\end{equation}
In the case when the boundary is nonsmooth, we consider the corresponding weak formulation of the problem
on the Hilbert spaces~$H^1_0(\Oe)$ and~$H^1_0(\Ot)$ with the usual inner product.
Note though, that the techniques used are applicable to a wider class of partial differential operators.
In particular to uniformly elliptic operators of second order.

It is known that if the two domains are close enough, both problems have the same number of
eigenvalues in a small enough neighborhood of~$\Llm[{}]$ as the multiplicity of the eigenvalue.
This means that for a fixed eigenvalue~$\Llm[{}]$ of~(\ref{eq:maineq}) of multiplicity~$m$,
there are precisely~$m$ eigenvalues~$\Lum[{}]$ of~(\ref{eq:maineq2}) (counting multiplicity) near~$\Llm[{}]$.
This is a consequence of
the continuous dependence of eigenvalues on the domain;
see, e.g., Kato~\cite{Kato1966} (Sections~IV.3 and~V.3) or Henrot~\cite{Henrot2006} and references therein.
The explicit result in terms of quantities used in this article can be found in Kozlov~\cite{Kozlov2013} (Proposition~3).
We will denote by~$\Xj[k] \subset H^1_0(\Oe)$ the eigenspace corresponding to the eigenvalue~$\Llm[k]$ and denote
the dimension of~$\Xj[k]$ by~$\Jm[k]$.
For our results, we will characterize how close the two domains are in the sense
of the Hausdorff distance between the sets~$\Oe$ and~$\Ot$, i.e.,
\begin{equation}
\label{eq:d_def}
d = \max \{ \sup_{x \in \Oe} \inf_{y \in \Ot} |x-y|, \; \sup_{y \in \Ot} \inf_{x \in \Oe} |x-y| \}.
\end{equation}
We do not assume that one domain is a subdomain of the other. It should be noted however,
that the abstract result presented below in Section~\ref{s:prelim} permits a more general type of proximity quantity
for the two domains.

We consider three cases of regularity of the boundary~$\Ge$, namely $C^{1,\alpha}$, $C^1$,
and Lipschitz boundaries. Let us first consider the Lipschitz case.
Then there exists a positive constant~$M$ such that the boundary $\Ge$ can be covered by
a finite number of cylinders $\Ck$,~$k=1,2,\ldots,N$, where there exists orthogonal coordinate systems in which
\[
\Ck \cap \Oe= \Ck \cap \{ y = (y', y_n) \,:\, y_n > h_k^{(1)}(y')\},
\]
where the center of $\Ck$ is at the origin 
and
\[
\Ck = \{ (y',y_n)\,:\, y' \in B'_{r_k}(0) ,\, |y_n| < \delta_k \}. 
\]
Here,~$B'_{k}=B'_{r_k}(0)$ is the $(n-1)$-dimensional ball of radius $r_k$ and with the center $0$. 
We assume that~$h_k^{(1)}(0) = 0$ and that~$h_k^{(1)}$ are Lipschitz functions, i.e.,
\[
|h_k^{(1)}(y') - h_k^{(1)}(z')| \leq M |y' - z'|.
\]
This class of domains defined by a constant~$M$ and cylinders~$\Ck$,~$k=1,2,\ldots,N$, will be denoted in what follows by
${\mathcal L}={\mathcal L}(M,\C_1,\ldots,\C_N)$.

We assume that~$\Ot$ is close to~$\Oe$ in the sense that~$\Ot \in {\mathcal L}(M,\C_1,\ldots,\C_N)$ can be described by
\[
\Ck \cap \Ot = \Ck \cap \{ y = (y', y_n) \,:\, y_n > h_k^{(2)}(y')\},
\]
where~$h_k^{(2)}$ are also Lipschitz continuous with Lipschitz constant~$M$ and
\[
\widehat{d} = \max_{k=1,2,\ldots,N} \sup \{ | h_k^{(1)}(y') - h_k^{(2)}(y')| \,:\, y'\in B'_k \}
\]
is assumed to be small. One can show that there exists positive constants~$c_1$ and~$c_2$, 
such that~$c_1 \widehat{d} \leq d \leq c_2 \widehat{d}$.

The case when $\Oe$ is a $C^1$- or $C^{1,\alpha}$-domain is defined analogously, with the following additional
assumptions.

\noindent {\bf $\bm{C^1}$-assumption.} We assume that $h_k^{(1)} \in C^{1}(\overline{B'_k})$ such that
\begin{equation}\label{K2a}
h_k^{(1)}(0) = \partial_{x_i} h_k^{(1)}(0) = 0, \quad i=1,2,\ldots,n-1,
\end{equation}
and
\begin{equation}
\label{eq:grad_hk_c1}
| \nabla ( h_k^{(1)} - h_k^{(2)}) | = o(1), \quad \mbox{as } d \rightarrow 0.
\end{equation}

\noindent {\bf $\bm{C^{1,\alpha}}$-assumption.} We assume that~$h_k^{(1)} \in C^{1,\alpha}(\overline{B'_k})$ 
and that~(\ref{K2a}) holds. Furthermore,
\begin{equation}
\label{eq:grad_hk_holder}
| \nabla ( h_k^{(1)} - h_k^{(2)}) | \leq C d^{\alpha}.
\end{equation}

Note that $h_k^{(2)}$ are only assumed to be Lipschitz continuous in both cases and satisfy~(\ref{eq:grad_hk_c1}) 
or~(\ref{eq:grad_hk_holder}), respectively.

Let us define the function~$\sigma$ on the surface $\Ge$ in the case of the $C^1$-assumption on the boundary. 
Let $\nu = \nu(P)$ be the unit outward normal to the boundary~$\Ge$ at the point $P$. 
For $P \in \Ge$ we introduce the number $\sigma = \sigma(P)$. It is the smallest (in the absolute value) root 
of the equation
\begin{equation}
\label{KK1a}
P + \sigma \nu = Q \in \Gt,
\end{equation}
where~$Q$ is the nearest point on~$\Gt$ to~$P$ lying on the line passing through~$P$ with the direction~$\nu$. 
Clearly~$\sigma$ is positive if~$Q$ is outside~$\Oe$ and negative if~$Q$ is inside. 
One can verify that this function is Lipschitz continuous on~$\Ge$ with Lipschitz constant 
depending on~$M$ and~$\C_1,\ldots,\C_N$.

\begin{theorem}
\label{i:t:holder_boundary}
Suppose that~$\Oe$ is a~$C^{1,\alpha}$-domain with~$0 < \alpha < 1$ and that~$\Ot$ is as described above.
Then
\begin{equation}
\label{i:eq:holder_main_boundary}
\begin{aligned}
\Lum[k] - \Llm[m] = {} &
\kk + O(d^{1+\alpha})
\end{aligned}
\end{equation}
for every~$k = 1,2,\ldots,\Jm[m]$.
Here~$\kk[{}] = \kk$ is an eigenvalue of the problem
\begin{equation}
\label{eq:T2_tk_intro}
\kk[{}] \int_{\Oe} \nabla \vp \cdot \nabla \vs \, dx =
\int_{\Ge} \sigma \, 
	\nabla \vp \cdot \nabla \vs
	\,  dS
\quad \mbox{for all } \vs \in \Xj[m],
\end{equation}
where~$\vp \in \Xj[m]$. Moreover, $\kk[1],\kk[2],\ldots,\kk[\Jm]$ in~{\rm(}\ref{i:eq:holder_main_boundary}{\rm)}
run through all eigenvalues of~{\rm(}\ref{eq:T2_tk_intro}{\rm)} counting their multiplicities.
\end{theorem}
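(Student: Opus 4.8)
The plan is to reduce the problem to a single bilinear form on the finite-dimensional eigenspace~$\Xj[m]$ and then to evaluate that form by converting a thin-layer volume integral into a surface integral over~$\Ge$. For the reduction I would invoke the abstract result of Section~\ref{s:prelim}: after transferring each~$\vp \in \Xj[m] \subset H^1_0(\Oe)$ to a function~$\widetilde\vp \in H^1_0(\Ot)$ that coincides with~$\vp$ away from the boundary and is corrected inside the layer between~$\Ge$ and~$\Gt$ so as to vanish on~$\Gt$, the eigenvalues~$\Lum[k]$ near~$\Llm[m]$ differ from~$\Llm[m]$---up to a remainder quadratic in the closeness of the two problems---by the eigenvalues of the reduced perturbation form built from the Dirichlet energies,
\[
b(\vp,\vs) = \int_{\Ot} \nabla \widetilde\vp \cdot \nabla \widetilde\vs \, dx - \int_{\Oe} \nabla \vp \cdot \nabla \vs \, dx, \qquad \vp,\vs \in \Xj[m].
\]
The accompanying perturbation of the~$L^2$-mass is negligible, since~$\vp = 0$ on~$\Ge$ forces its layer contribution to be~$O(d^2)$. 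Thus, once I show
\[
b(\vp,\vs) = \int_{\Ge} \sigma \, \nabla \vp \cdot \nabla \vs \, dS + O(d^{1+\alpha}),
\]
the reduced eigenvalues become exactly those of the generalized problem~(\ref{eq:T2_tk_intro}), and~(\ref{i:eq:holder_main_boundary}) follows.

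To prove this identity I would work in the cylinders~$\Ck$ and use a partition of unity subordinate to them. In each~$\Ck$ the two boundaries are graphs~$y_n = h_k^{(1)}(y')$ and~$y_n = h_k^{(2)}(y')$, so~$b$ is, after discarding the transfer corrections, a sum of integrals of~$\nabla \vp \cdot \nabla \vs$ over the graph layers~$\{\,\min(h_k^{(1)},h_k^{(2)}) < y_n < \max(h_k^{(1)},h_k^{(2)})\,\}$. Integrating first in~$y_n$ gives the net contribution
\[
\int_{B'_k} \Big( \int_{h_k^{(1)}(y')}^{h_k^{(2)}(y')} \nabla \vp \cdot \nabla \vs \, (y',y_n) \, dy_n \Big) \, dy'.
\]
Integrating along the vertical slices rather than along normals is deliberate: it avoids any appeal to the Jacobian of the normal map, which is ill-behaved when~$\Gt$ is merely Lipschitz.

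The key analytic input is that, for a~$C^{1,\alpha}$-domain~$\Oe$, the eigenfunctions belong to~$C^{1,\alpha}(\overline{\Oe})$ by Schauder estimates up to the boundary, so~$\nabla \vp \cdot \nabla \vs \in C^{0,\alpha}(\overline{\Oe})$. Replacing the integrand by its boundary value~$\nabla \vp \cdot \nabla \vs \, (y',h_k^{(1)}(y'))$ therefore costs at most~$C\,|h_k^{(2)} - h_k^{(1)}|^{\alpha}$ pointwise, and since~$|h_k^{(2)} - h_k^{(1)}| \leq C d$ the accumulated error is bounded by
\[
C \int_{B'_k} |h_k^{(2)} - h_k^{(1)}| \, |h_k^{(2)} - h_k^{(1)}|^{\alpha} \, dy' = O(d^{1+\alpha}).
\]
What remains is~$\int_{B'_k} (h_k^{(2)} - h_k^{(1)}) \, \nabla \vp \cdot \nabla \vs \, (y',h_k^{(1)}) \, dy'$, and I would convert the vertical gap~$(h_k^{(2)} - h_k^{(1)}) \, dy'$ into~$\sigma \, dS$ using the definition~(\ref{KK1a}) of~$\sigma$ together with~$dS = \sqrt{1 + |\nabla h_k^{(1)}|^2} \, dy'$. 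Because~$\Oe$ is~$C^{1,\alpha}$ and the gradients obey~(\ref{eq:grad_hk_holder}), the discrepancy between the vertical gap and the normal gap is controlled by~$C d^{\alpha}$ relative to the gap, contributing a further~$O(d^{1+\alpha})$. Summing over the partition of unity gives the claimed identity; on~$\Ge$ one has~$\nabla \vp = (\ndiv \vp) \nu$, so the surface form is the classical~$\int_{\Ge} \sigma \, \ndiv \vp \, \ndiv \vs \, dS$.

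The main obstacle is twofold, and both parts hinge on the~$C^{1,\alpha}$ hypothesis. First, one must show that the transfer~$\vp \mapsto \widetilde\vp$ and its correction supported in the layer perturb~$b$ by only~$O(d^{1+\alpha})$; with~$\Gt$ only Lipschitz this forces one to avoid normal coordinates and to estimate the correction in~$H^1$ via the Hölder continuity of~$\nabla \vp$. Second, and more delicate, is the layer-to-surface reduction with the sharp exponent: both the replacement of the integrand by its boundary trace and the passage from the vertical gap to~$\sigma$ must lose only~$d^{1+\alpha}$, which is exactly where~$\nabla \vp \in C^{0,\alpha}$ and the bound~(\ref{eq:grad_hk_holder}) are used. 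Once these estimates are in place, the abstract result of Section~\ref{s:prelim} converts the reduced form into the asymptotics~(\ref{i:eq:holder_main_boundary}), with~$\kk[1], \kk[2], \ldots, \kk[\Jm]$ running through the eigenvalues of~(\ref{eq:T2_tk_intro}) counted with multiplicity.
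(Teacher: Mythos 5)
Your overall architecture matches the paper's: an abstract reduction of $\Lambda_k(\Ot)-\Lambda_m(\Oe)$ to a bilinear form on $\Xj[m]$, followed by a thin-layer-to-surface conversion carried out in the graph coordinates of the cylinders. The half of the argument you actually execute --- the layer $\Oe\setminus\Ot$, where Schauder estimates give $\vp\in C^{1,\alpha}(\overline{\Oe})$, the integrand is replaced by its trace at cost $O(d^{\alpha})$ per unit gap, and the vertical gap $h_2-h_1$ is traded for $\sigma\,dS$ --- is essentially the paper's Lemma~\ref{l:lip_boundaryint} and is sound.

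The genuine gap is the part you label ``the main obstacle'' and then do not resolve: the contribution of the corrector that turns $\vp$ into an admissible function on $\Ot$. On $\Ot\setminus\overline{\Oe}$ the extension of $\vp$ by zero has vanishing gradient, so the $\sigma_{+}$ half of the leading term is carried entirely by this corrector (in the paper, by $\Rvp$, via Lemma~\ref{l:lip_boundaryint2} together with $\Rvp=\vp-\Vp$); it is not an error term that can be ``discarded'' and estimated afterwards. Moreover, the corrector solves a boundary value problem in $\Ot$, which is only Lipschitz, so H\"older continuity of $\nabla\vp$ on $\overline{\Oe}$ --- the only tool you invoke --- cannot control it. The paper needs here the Dahlberg--Jerison--Kenig nontangential maximal function estimates (\ref{eq:N_Rvpo}), (\ref{M4b}) and, crucially, Lemma~\ref{LM28a}: one applies the boundary estimates to $u_j=\partial_{y_j}\vp$ and $\partial_{y_j}\Rvp$ and exploits that the tangential derivative of a function vanishing on $\Ge$ has boundary data of size $O(d^{\alpha})$ in coordinates adapted to $P\in\Ge$, which is exactly where the $C^{1,\alpha}$ hypothesis on $\Ge$ (and not on $\Gt$) enters. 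The same machinery is what justifies your unproved claim that the abstract remainder is ``quadratic in the closeness'' (the paper's $\rho=O(d^2)$, Corollary~3.8, which rests on Lemmas~\ref{LM12a}--\ref{L21b}). The counterexample in Section~\ref{SA4a} shows that for a merely Lipschitz perturbation the corrector's energy is of the same order $O(d)$ as the main term, so this step is not a technicality one may defer: it is the substance of the theorem, and your proposal does not supply it.
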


\noindent
Observe that~(\ref{eq:T2_tk_intro}) can be phrased as a spectral problem on the Hilbert space~$\Xj[m]$ by using the
Riesz representation theorem of the operator on the right-hand side. Here $dS$ is the surface measure.

\begin{theorem}
\label{i:t:C1_boundary}
Suppose that~$\Oe$ is a~$C^{1}$-domain, and~$\Ot$ is as described above.
Then
\begin{equation}
\label{i:eq:C1_main_boundary}
\begin{aligned}
\Lambda_k(\Ot) - \Lambda_m(\Oe) = \kappa_k +  o(d)
\end{aligned}
\end{equation}
for every~$k = 1,2,\ldots,J_m$.
\end{theorem}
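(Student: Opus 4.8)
The plan is to deduce Theorem~\ref{i:t:C1_boundary} by running the proof of the $C^{1,\alpha}$-case, Theorem~\ref{i:t:holder_boundary}, inside the abstract framework of Section~\ref{s:prelim}, and to locate precisely where the Hölder exponent~$\alpha$ enters the remainder estimates. At each such place I would replace the quantitative bound~\eqref{eq:grad_hk_holder} by the qualitative bound~\eqref{eq:grad_hk_c1}, and the Hölder modulus of~$\nabla h_k^{(1)}$ by its (merely continuous) modulus of continuity. The upshot should be that every contribution estimated as~$O(d^{1+\alpha})$ in the $C^{1,\alpha}$-case instead becomes~$o(d)$ here, which is exactly the content of~\eqref{i:eq:C1_main_boundary}.

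First I would check that the leading term is unaffected by the weaker regularity. The numbers~$\kk$ are defined through the bilinear boundary form in~\eqref{eq:T2_tk_intro}, and this definition carries over unchanged under the $C^1$-assumption: the function~$\sigma$ remains Lipschitz on~$\Ge$, and for~$\vp,\vs \in \Xj[m]$ the gradient traces belong to~$L^2(\Ge)$ also on~$C^1$-domains. Hence~$\kk[1],\ldots,\kk[\Jm]$ are the same objects as in Theorem~\ref{i:t:holder_boundary}, and only the remainder requires fresh attention.

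The heart of the argument is therefore the remainder estimate. In the $C^{1,\alpha}$-case each remainder term factors into a part that scales like~$d$---originating from the width of the collar between~$\Ge$ and~$\Gt$ together with~$\|\sigma\|_{\infty} = O(d)$---and a part that scales like~$d^{\alpha}$, coming either from the Hölder modulus of~$\nabla h_k^{(1)}$ or directly from~\eqref{eq:grad_hk_holder}. Under the $C^1$-assumption I would replace this second factor by~$o(1)$: since~$h_k^{(1)} \in C^1(\overline{B'_k})$ on a compact set, $\nabla h_k^{(1)}$ is uniformly continuous and its modulus of continuity tends to zero, while~\eqref{eq:grad_hk_c1} supplies the required smallness of~$\nabla(h_k^{(1)} - h_k^{(2)})$. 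The product of an~$O(d)$ factor and an~$o(1)$ factor is~$o(d)$.

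The main obstacle will be to verify that these remainder estimates are genuinely \emph{soft}, that is, that no step in the $C^{1,\alpha}$-proof covertly exploits a positive power of~$d$ where mere smallness is available. Concretely, I would re-examine the coordinate changes straightening~$\Ge$ and the comparison of the two collar regions, and confirm that each can be controlled purely by uniform continuity of the gradients rather than by a Hölder bound. Some additional care is needed at the overlaps of the cylinders~$\Ck$ and in the associated partition of unity, so that the finitely many local~$o(1)$ estimates combine into a single uniform~$o(1)$ factor and the~$o(d)$ bound survives the localization.
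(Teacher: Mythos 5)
Your overall route coincides with the paper's: both theorems are proved in parallel through the abstract formula (\ref{M28s}), the reduction to the eigenvalue problem (\ref{M28u}) with remainder $o(d)$ resp.\ $O(d^{1+\alpha})$, and the conversion of the volume integrals over $\Oe\setminus\Ot$ and $\Ot\setminus\Oe$ into boundary integrals against $\sigma_{\mp}$; your observation that the leading term $\kappa_k$ is the same object as in Theorem~\ref{i:t:holder_boundary} is also correct. The gap is in your central mechanism, namely the claim that every $O(d^{1+\alpha})$ remainder factors as $O(d)$ times a quantity governed by the (H\"older, resp.\ mere) modulus of continuity of $\nabla h_k^{(1)}$. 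That is true for the expansion of $\sigma$ in terms of $h_1-h_2$ and for the tangential-gradient estimates of Lemma~\ref{LM28a}, which do soften to $o(1)$ exactly as you describe. It is \emph{not} true for the key conversion step, Lemma~\ref{l:lip_boundaryint} applied to $U=\partial_{x_j}\vp$ and to $\partial_{x_j}\Rvp$: there the $C^{1,\alpha}$-case controls the replacement of $U^2(y',y_n)$ by its boundary value $g^2(y')$ across the collar of width $O(d)$ using the H\"older continuity of $U$ up to the boundary, obtained from Schauder (Agmon--Douglis--Nirenberg) estimates, which require $g\in C^{0,\alpha}$ and a $C^{1,\alpha}$-domain. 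In the $C^1$-setting the datum $g=\partial_{x_j}\vp|_{\Ge}$ is only in $L^2(\Ge)$, the solution is not continuous up to the boundary, and $N(\nabla U)$ need not even lie in $L^2$; uniform continuity of $\nabla h_1$ gives you nothing here. Without a substitute, the replacement error is only $O(d)$ --- the same order as the main term --- and the theorem does not follow.

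The missing idea is the density argument in the paper's proof of Lemma~\ref{l:lip_boundaryint}: approximate $g$ by $g_j\in H^1(\Ge)$ with $\epsilon_j=\|g-g_j\|_{L^2(\Ge)}\to 0$, use the stability bound (\ref{M30a}) to control the contribution of $g-g_j$ by $Cd\,\epsilon_j^2$, and for each fixed $j$ use that $\widehat{N}(\partial_{y_n}U_j)\in L^2$ (solvability of the regularity problem for $H^1$ data) to bound the replacement error by $C_jd^2=o(d)$; one then sends $d\to 0$ before $j\to\infty$. You would need to add this, or an equivalent two-parameter approximation, to make your plan close.
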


\noindent
If~$\Oe$ and~$\Ot$ are two Lipschitz domains, then we can prove the estimate
\begin{equation}
\label{JiJ}
|\Lambda_k(\Ot) - \Lambda_m(\Oe)| \leq C d
\end{equation}
for every~$k = 1,2,\ldots,J_m$.

We note that if~$u$ satisfies (\ref{eq:maineq}) for a Lipschitz domain~$\Oe$,
then the normal derivative~$\partial_{\nu} u|_{\Ge} \in L^2(\Ge)$ and so the integral in the
right-hand side of~(\ref{eq:T2_tk_intro}) is well defined. Despite this fact,
we have demonstrated in Section~\ref{SA4a} that the formula presented in~(\ref{i:eq:holder_main_boundary}) 
does not hold for Lipschitz domains and their small perturbations. Indeed, we show in Section~\ref{SA4a} 
that the remainder~$O(d^{1+\alpha})$ is optimal for~$C^{1,\alpha}$-perturbations.
The result similar to Theorem \ref{i:t:holder_boundary}
for the Neumann problem is proved in~\cite{Kozlov2016} and an analogue of Theorem \ref{i:t:C1_boundary} 
for the Neumann problem, albeit with a more complicated expression for the leading term~$\kappa_k$, is
derived in~\cite{Thim2015}.

\bigskip

The paper is organized as follows. In Section~\ref{s:prelim} we present an abstract approach 
developed in \cite{Kozlov2013}, which concerns the asymptotics of eigenvalues of unbounded 
operators when their domains are changed. As a result of this approach, a theorem on asymptotics 
of eigenvalues is presented. It involves different terms and these terms are estimated in the remaining 
parts of the paper. The analysis is based on the theory of elliptic boundary value problems in
Lipschitz, $C^1$- and~$C^{1,\alpha}$-domains that was developed in particular in the works of Dahlberg, 
Fabes, Jodeit, Kenig and Rivi\'{e}re. In the last section, we present an example which demonstrates the
sharpness of Theorems~\ref{i:t:holder_boundary} and~\ref{i:t:C1_boundary}.


\section{Preliminary Results and Definitions}
\label{s:prelim}

Here we present an abstract result from \cite{Kozlov2013}, which will play an important 
role in the proofs of Theorems~\ref{i:t:holder_boundary} and~\ref{i:t:C1_boundary}.

We suppose that~$\Oe$ and~$\Ot$ are open Lipschitz domains and that~$\Dd$ is an open ball such
that~$\Oe \subset \Dd$ and~$\Ot \subset \Dd$. Put~$\He = H^1_0(\Oe)$,~$\Ht = H^1_0(\Ot)$ and finally~$\H = H^1_0(\Dd)$. 
We extend functions by zero outside their respective domains.
We let~$\ip{\cdot}{\cdot}$ and~$\Lip{\cdot}{\cdot}$ denote the inner products on~$\H$ given by
\[
\ip{u}{v} = \int_{\Dd} \nabla u \cdot \nabla v \, dx
\qquad \mbox{and} \qquad
\Lip{u}{v} = \int_{\Dd} u \, v \, dx,
\]
respectively. Moreover, let~$\| \, \cdot \, \|$ and~$| \, \cdot \, |$ be the norms induced by~$\ip{\cdot}{\cdot}$
and~$\Lip{\cdot}{\cdot}$, respectively. We will consider the spectral problems
\begin{equation}
\label{eq:maineig}
\ip{\vp}{v} = \lambda \Lip{\vp}{v} \quad \mbox{for every~$v \in \He$}
\end{equation}
and
\begin{equation}
\label{eq:perturbeig}
\ip{U}{V} = \mu \Lip{U}{V} \quad \mbox{for every~$V \in \Ht$.}
\end{equation}
We enumerate the eigenvalues~$\Llm[k] = \lm[k]$,~$k=1,2,\ldots$, of~(\ref{eq:maineq}) increasingly
according to~$0 < \lm[1] < \lm[2] < \cdots$.
Let~$\Xj[k] \subset \He$ be the eigenspace corresponding to the eigenvalue~$\lm[k]$ and denote
the dimension of~$\Xj[k]$ by~$\Jm[k]$. 
In this article we study eigenvalues of~(\ref{eq:perturbeig}) located in a neighborhood of~$\lm$,
where~$m$ is fixed.
Note that it is known that there are precisely~$\Jm[m]$ eigenvalues of~(\ref{eq:maineq2}) near~$\lm$;
see, e.g., Lemma~3.1 in~\cite{Kozlov2013}.
We denote these eigenvalues by~$\um[1],\um[2],\ldots,\um[{\Jm[m]}]$.

Let~$\Sj \colon \H \rightarrow \Hj$ be the orthogonal projectors with image~$\Hj$ for~$j=1,2$.
For simplicity, we denote~$\St$ by~$\S$. Furthermore, put~$\T v = v - \S v$ for~$v \in \He$.
We will employ results from \cite{Kozlov2013}, where an abstract framework for domain dependence
of Dirichlet eigenvalues was presented. To measure the closeness of two domains, the norm difference between
the projectors~$\Se$ and~$\St$ is used:
\begin{equation}
\label{eq:sigma}
| (\Se - \St) u |^2 \leq \eps \| u \|^2, \quad u \in \H.
\end{equation}
The best constant~$\eps$ measures the proximity of the
spaces~$\He$ and~$\Ht$, and therefore the closeness of~$\Oe$ and~$\Ot$.
One can show that~$\eps \approx d$ in our case.

To formulate an abstract asymptotic result, we will use the solution~$\Vp \in \Ht$ to the problem
\begin{equation}
\label{eq:def_Vp}
\ip{\Vp}{w} = \ip{\vp}{w} - \lm \Lip{\vp}{w} \quad \mbox{ for every~$w \in \Ht$}.
\end{equation}
Theorem~1 in \cite{Kozlov2013} can be reformulated as follows.

\begin{theorem}
\label{t:kozlov2013}
The asymptotic formula
\begin{equation}\label{M28s}
\um[k]^{-1} = \lm[m]^{-1} + \tk + O(\rr + |\tk|\eps), \quad k=1,2,\ldots,J_m,
\end{equation}
holds, where
\begin{equation}
\label{eq:rho}
\rr = \max_{\vp \in \Xj[m], \; \| \vp \| = 1} \bigl( |\T \vp |^2 + |\Vp|^2 + \eps \| \Vp \|^2 \bigr).
\end{equation}
Here~$\tau = \tk$ is an eigenvalue of the problem
\begin{equation}
\label{eq:main_asymp_D}
\lm^{-1} \bigl( \ip{\Vp}{\Vs} - \ip{\T\vp}{\T\vs} - \ip{\Vp}{\vs} - \ip{\vp}{\Vs} \bigr) = \tau \ip{\S \vp}{\S \vs},
\end{equation}
where~$\vp \in \Xj[m]$ and~$\tau_1,\tau_2,\ldots,\tau_{j_m}$ run through all eigenvalues 
of~{\rm(}\ref{eq:main_asymp_D}{\rm)} counted with multiplicity.
\end{theorem}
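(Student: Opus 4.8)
The plan is to recast both spectral problems as eigenvalue problems for compact self-adjoint operators on the fixed space~$\H$ and then to perform a Lyapunov--Schmidt reduction onto the eigenspace~$\Xj[m]$. Introduce the operator~$K \colon \H \to \H$ defined by~$\ip{Ku}{v} = \Lip{u}{v}$ for all~$v \in \H$; by the compact embedding~$H^1_0(\Dd) \hookrightarrow L^2(\Dd)$ it is compact, self-adjoint and positive. Testing~(\ref{eq:maineig}) against~$v \in \He$ and using~$\vp \in \He$ shows that the numbers~$\lm[k]^{-1}$ are exactly the eigenvalues of~$A_1 := \Se K \Se$ acting on~$\He$, and likewise the~$\um[k]^{-1}$ are the eigenvalues of~$A := \S K \S$ acting on~$\Ht$. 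Thus the theorem becomes a statement comparing the eigenvalues of~$A$ near~$\lm^{-1}$ with~$\lm^{-1}$ itself, the perturbation being encoded entirely in the difference~$\Se - \S$ controlled by~$\eps$ through~(\ref{eq:sigma}).

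Next I would compute the defect of the natural trial vectors~$\S\vp$, $\vp \in \Xj[m]$. From~(\ref{eq:def_Vp}) one reads off that~$\Vp = \S\vp - \lm \S K \vp$, hence~$\S K \vp = \lm^{-1}(\S\vp - \Vp)$, and since~$\S\vp = \vp - \T\vp$ this gives
\[
A(\S\vp) - \lm^{-1}\,\S\vp = \S K (\S \vp) - \lm^{-1} \S \vp = -\lm^{-1}\Vp - \S K (\T\vp).
\]
Both terms on the right are small: $\Vp$ and~$\T\vp$ enter quadratically in~$\rr$ through~(\ref{eq:rho}). Projecting the perturbed equation~$A U = \um^{-1} U$ onto~$\S\Xj[m]$ and onto its orthogonal complement, and using a uniform spectral gap to invert~$A - \lm^{-1}$ on the complement, expresses the off-diagonal correction~$U - \S\vp$ to first order in terms of~$\Vp$ and substitutes it back. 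Carrying the expansion to second order, which is necessary to capture the term~$\ip{\Vp}{\Vs}$ (a genuine second-order contribution, analogous to~$\sum |V_{0n}|^2/(E_0 - E_n)$ in Rayleigh--Schr\"odinger theory), and then symmetrizing in~$\vp,\vs$, reduces the problem to a finite-dimensional generalized eigenvalue problem on~$\Xj[m]$ with mass form~$\ip{\S\vp}{\S\vs}$.

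The remaining step is to identify the reduced form with the left-hand side of~(\ref{eq:main_asymp_D}). Using the defining relation~(\ref{eq:def_Vp}) with the admissible test functions~$\Vs$ and~$\S\vs$, the eigenrelation~$\ip{\vp}{\vs} = \lm \Lip{\vp}{\vs}$ on~$\Xj[m]$, the orthogonality~$\S\T\vs = 0$ (so that~$\T$-components are~$\ip{\cdot}{\cdot}$-orthogonal to~$\Ht$), and the identity~$\ip{K u}{\S\vs} = \Lip{u}{\S\vs}$, the collected first- and second-order terms simplify to~$\lm^{-1}\bigl( \ip{\Vp}{\Vs} - \ip{\T\vp}{\T\vs} - \ip{\Vp}{\vs} - \ip{\vp}{\Vs} \bigr)$. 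The eigenvalues of this form relative to~$\ip{\S\vp}{\S\vs}$ are the~$\tk$, and one obtains~$\um[k]^{-1} = \lm^{-1} + \tk + O(\rr + |\tk|\eps)$; the term~$|\tk|\eps$ arises because~$\{\S\vp\}$ is orthonormal only up to an error controlled by~$\eps$ via~(\ref{eq:sigma}), so the reduced mass form differs from the~$\Xj[m]$-mass form by~$O(\eps)$.

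The main obstacle is the quantitative bookkeeping rather than the formal expansion. One must establish a spectral gap estimate showing that~$A - \lm^{-1}$ is boundedly invertible on the~$\ip{\cdot}{\cdot}$-orthogonal complement of~$\S\Xj[m]$ in~$\Ht$, with a bound uniform as the perturbation tends to zero; this is what guarantees that exactly~$\Jm[m]$ eigenvalues of~$A$ cluster near~$\lm^{-1}$ and that the reduction is well defined. Granting this, the delicate part is verifying that every discarded contribution is genuinely of order~$\rr + |\tk|\eps$ and not merely~$o(1)$: that the eigenvector correction is~$O(\rr^{1/2})$ in~$\|\cdot\|$, that the second-order remainder beyond~$\ip{\Vp}{\Vs}$ is~$O(\rr)$, and that the mass-form mismatch contributes precisely the stated~$|\tk|\eps$. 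These estimates rely only on~(\ref{eq:sigma}) and~(\ref{eq:rho}) together with the uniform gap, and constitute the technical heart of the argument.
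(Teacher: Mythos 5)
The paper does not prove this theorem at all: it is imported verbatim as a reformulation of Theorem~1 of Kozlov's 2013 Math.\ Ann.\ paper \cite{Kozlov2013}, so there is no in-paper argument to compare against. Your sketch follows essentially the same operator-theoretic route as that reference --- comparing the compact self-adjoint operators $\Se K \Se$ and $\S K \S$ on the common space $\H$ and reducing onto $\S\Xj[m]$ via the defect identity $\S K\S\vp - \lm^{-1}\S\vp = -\lm^{-1}\Vp - \S K(\T\vp)$, which you compute correctly --- and the points you flag (the uniform spectral gap and the quantitative bookkeeping yielding $O(\rr + |\tk|\eps)$) are exactly the technical content carried out in \cite{Kozlov2013}.
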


\section{Estimates for the functions $\varphi$, $\Psi_\varphi$ and $T\varphi$}
\label{s:def_lipdom}

In this section we give estimates and some representations for the terms appearing in (\ref{eq:rho})
and (\ref{eq:main_asymp_D}). All of them are valid for Lipschitz domains.

We will use the following notation.
Let~$\Om$ be a Lipschitz domain.
The truncated cones~$\Gamma(x')$ at~$x' \in \partial \Om$ are given by, e.g.,
\[
\Gamma(x') = \{ x \in \Om \sa  |x - x'| < 2 \mbox{dist}(x, \partial \Om) \}
\]
and the non-tangential maximal function is defined on the boundary~$\partial \Om$ by
\[
\N(u)(x') = \max_{k=1,2,\ldots,N} \sup \{ |u(x)| \sa x \in \Gamma(x') \cap {\mathcal C}_k \}
\]
and
$$
\N(\nabla u)(x') = \max_{k=1,2,\ldots,N} \sup \{ |\nabla u(x)| \sa x \in \Gamma(x') \cap {\mathcal C}_k \}
$$
We refer to Kenig~\cite{Kenig1994} for further details.

\noindent We will use the short-hand notation~$\nablat u$ for the tangential gradient.

From the definition of $\Vp$ in~(\ref{eq:def_Vp}), it follows that $\Vp$ is harmonic in 
the domains~$\Oe \cap \Ot$ and~$\Ot \setminus \overline{\Oe}$. We will use the representation
\begin{equation}
\label{eq:rep_Vp}
\Vp = \vp - \Rvp,
\end{equation}
where~$\Rvp \in \Ht$ solves the equation~$\ip{\Rvp}{w} = \lm \Lip{\vp}{w}$ for all~$w \in \Ht$, or, equivalently,
\[
-\Delta \Rvp =\lm \vp \;\;\mbox{in $\Ot$}\;\;\mbox{and}\;\; \Rvp = 0\;\;\mbox{on $\Gt$.}
\]
We represent $\Rvp$ as $\Rvp = \Rvpo + \Rvpt$, where
\[
-\Delta \Rvpo = \lm \vp \;\;\mbox{in $\Dd$}\;\;\mbox{and}\;\; \Rvpo = 0 \;\;\mbox{on $\partial \Dd$}
\]
and
\[
\Delta \Rvpt = 0 \;\;\mbox{in $\Ot$}\;\;\mbox{and}\;\; \Rvpt = -\Rvpo \;\;\mbox{on $\Gt$.}
\]
Since $\vp \in H^1(\Dd)$ it follows that $\Rvpo \in H^3(\Dd)$ and
\begin{equation}
\label{M7a}
\| \Rvpo \|_{H^3(\Dd)} \leq C \| \vp \|_{H^1(\Dd)}.
\end{equation}
Moreover, from, e.g., Theorems 1 and~3 in Jerison and Kenig~\cite{jerisonkenig81N}, we obtain that
\begin{equation}
\label{eq:N_Rvpo}
\int_{\Gt} \left( | N(\Rvpt) |^2 + | N(\nabla \Rvpt) |^2 \right) dS \leq
C \| \Rvpo \|_{H^{1}(\Gt)}^2 \leq C\|\vp\|_{H^1(\Dd)}^2.
\end{equation}
From the representation (\ref{eq:rep_Vp}) one can see that the normal derivative of the 
function~$\Vp$ has a jump on $\Ge$ since the function $\Rvp$ is of class $H^3$ in a
neighborhood of~$\Ge$.

\newcommand{\Lb}{\ensuremath{L^2}}

To make the notation more compact, we let
\[
\Oet = \Oe \cap \Ot, \;\; \Get = \partial \Oet \;\; \mbox{and} \;\; \Lb = L^2(\Get).
\]
Then the boundary~$\Get$ is given by the relation~$y_n=h^{(12)}_k(y')$ in each cylinder~${\mathcal C}_k$, 
where~$h^{(12)}_k(y')=\max (h^{(1)}_k(y'),h^{(2)}_k(y'))$. One can verify that this function
also satisfies~(\ref{eq:grad_hk_c1}) in the~$C^1$-case and~(\ref{eq:grad_hk_holder}) in
the~$C^{1,\alpha}$-case. In what follows we shall use the short-hand notation~$h_1$,~$h_2$ and~$h_{12}$
for the functions~$h^{(1)}_k$,~$h^{(2)}_k$ and~$h^{(12)}_k$, respectively.

Let~$\nu = \nu(P)$ denote the outwards unit normal vector at the point~$P$ to the boundary $\Ge$ ($\Gt$).
The normal derivative~$\ndiv u$ at~$P \in \Ge$ ($P \in \Gt$) is defined by~$\ndiv u = \nabla u \cdot \nu$.
We can now formulate the following lemmas concerning the inner products in~(\ref{eq:main_asymp_D}).

\begin{lemma}
\label{l:VpVs}
Let~$\Oe$ and~$\Ot$ be Lipschitz domains from~${\mathcal L}(M,\C_1,\ldots,\C_N)$.
Then
\begin{equation}
\label{eq:est_VpVs}
\left| \ip{\Vp}{\Vs} - \int_{\Ot \setminus \Oe} \nabla \Rvp \cdot \nabla \Rvs \, dx \right|
\leq
	  C \| \Psi_\varphi \|_{\Lb}
	  \| \nabla_\tau \Psi_\psi \|_{\Lb}
\end{equation}
for all~$\vp,\vs \in \Xj[m]$.
\end{lemma}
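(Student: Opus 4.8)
The plan is to reduce the left-hand side of~(\ref{eq:est_VpVs}) to an integral of $\nabla \Vp \cdot \nabla \Vs$ over the common part $\Oet$, then to integrate by parts using harmonicity, and finally to trade the resulting normal derivative for a tangential one by means of a Rellich estimate. For the reduction, note that since $\Vp$ and $\Vs$ vanish outside $\Ot$ we have $\ip{\Vp}{\Vs} = \int_{\Ot} \nabla \Vp \cdot \nabla \Vs \, dx$, and I would split this integral over $\Oet$ and $\Ot \setminus \overline{\Oe}$. On the latter set the eigenfunctions (extended by zero) vanish, so the representation~(\ref{eq:rep_Vp}) gives $\Vp = -\Rvp$ and $\Vs = -\Rvs$ there, whence $\nabla \Vp \cdot \nabla \Vs = \nabla \Rvp \cdot \nabla \Rvs$. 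Because the interface between the two sets has measure zero, subtracting $\int_{\Ot \setminus \Oe} \nabla \Rvp \cdot \nabla \Rvs \, dx$ exactly cancels the contribution from $\Ot \setminus \overline{\Oe}$, and the left-hand side of~(\ref{eq:est_VpVs}) collapses to $\int_{\Oet} \nabla \Vp \cdot \nabla \Vs \, dx$.

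Next I would integrate by parts. As noted in the text, $\Vs$ is harmonic in $\Oet$, so Green's first identity on the Lipschitz domain $\Oet$ yields $\int_{\Oet} \nabla \Vp \cdot \nabla \Vs \, dx = \int_{\Get} \Vp \, \ndiv \Vs \, dS$, the normal derivative being taken from inside $\Oet$. To make this rigorous on $\Get$ I would invoke the non-tangential maximal function machinery for Lipschitz domains: the trace of $\Vp$ lies in $L^2(\Get)$ since $\Vp \in H^1(\Oet)$, while $\ndiv \Vs \in L^2(\Get)$ follows from the decomposition $\Vs = \vs - \Rvs$, using the boundary regularity of the eigenfunction $\vs$ together with the estimates~(\ref{M7a}) and~(\ref{eq:N_Rvpo}) for the two pieces $\Rvso$, $\Rvst$ of $\Rvs$ (see Kenig~\cite{Kenig1994} and Jerison and Kenig~\cite{jerisonkenig81N}). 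A Cauchy--Schwarz inequality then bounds the expression by $\| \Vp \|_{\Lb} \, \| \ndiv \Vs \|_{\Lb}$.

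The decisive step is to replace $\ndiv \Vs$ by the tangential gradient. For a function harmonic in a bounded Lipschitz domain the $L^2$ norms over the boundary of the normal and tangential derivatives are comparable, so $\| \ndiv \Vs \|_{L^2(\Get)} \leq C \, \| \nablat \Vs \|_{L^2(\Get)}$ with $C$ depending only on $M$ and $\C_1, \ldots, \C_N$. I would derive this from the Rellich identity applied with a vector field $\beta$ that is uniformly transversal to $\Get$, i.e.\ $\beta \cdot \nu \geq c_0 > 0$; such a field can be built by patching the downward directions $-e_n$ in the individual cylinders via a partition of unity, which is possible precisely because $\Get$ is the Lipschitz graph $y_n = h_{12}(y')$ in each $\C_k$. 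Writing $|\nabla \Vs|^2 = |\nablat \Vs|^2 + |\ndiv \Vs|^2$ and splitting $\beta \cdot \nabla \Vs$ into its normal and tangential parts, the identity expresses $\int_{\Get} (\beta \cdot \nu) |\ndiv \Vs|^2 \, dS$ through $\int_{\Get} (\beta \cdot \nu) |\nablat \Vs|^2 \, dS$ and a cross term that is absorbed by Young's inequality; transversality then gives the stated bound. Combining the three steps produces~(\ref{eq:est_VpVs}).

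I expect the main obstacle to lie in this last step together with the justification of the integration by parts: both rest on the $L^2$ theory of harmonic functions in Lipschitz domains, namely the non-tangential maximal function bounds and the Rellich estimate. One must check that the transversal vector field can be constructed globally over the several coordinate cylinders and that the interior terms generated by a non-constant $\beta$ do not leave any extraneous lower-order quantities in the final product estimate.
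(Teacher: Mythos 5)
Your proposal follows essentially the same route as the paper: the identity $\ip{\Vp}{\Vs} = \int_{\Ot \setminus \Oe} \nabla \Rvp \cdot \nabla \Rvs \, dx + \int_{\Oet} \nabla \Vp \cdot \nabla \Vs \, dx$ (using that $\Vp = -\Rvp$ off $\Oe$), Green's identity on $\Oet$ exploiting the harmonicity of $\Vs$ there, the Rellich-type bound $\| \ndiv \Vs \|_{\Lb} \leq C \| \nablat \Vs \|_{\Lb}$, and Cauchy--Schwarz. The paper simply asserts the Rellich estimate as inequality~(\ref{M19a}) where you sketch its derivation via a transversal vector field, but the argument is the same.
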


\begin{proof}
Since $\varphi$ and $\psi$ vanish outside $\Oe$, we have
\[
\ip{\Vp}{\Vs} 
=  \int_{\Ot \setminus \Oe} \nabla \Rvp \cdot \nabla \Rvs \, dx
+ \int_{\Oet} \nabla \Vp \cdot \nabla \Vs \, dx.
\]
Using the relation
\[
\int_{\Oet} \nabla \Vp \cdot \nabla \Vs \, dx
=
\int_{\Get} \Vp \ndiv \Vs \, dS
\]
and the inequality
\begin{equation}
\label{M19a}
\| \ndiv \Vs  \|_{\Lb}
\leq C
\| \nablat \Vs \|_{\Lb},
\end{equation}
we obtain that
\[
\left| \int_{\Oet} \nabla \Vp \cdot \nabla \Vs \, dx \right|
\leq C \| \Vp \|_{\Lb} \| \nablat \Vs \|_{\Lb},
\]
which in turn shows that~(\ref{eq:est_VpVs}) holds.
\end{proof}

\begin{lemma}
\label{l:vpVs}
Suppose that~$\Oe$ and~$\Ot$ are the same domains as in Lemma \ref{l:VpVs}.
Then
\begin{equation}
\label{eq:est_vpVs}
\bigl| \ip{\vp}{\Vs} \bigr| \leq C
\| \vp \|_{\Lb}
	\| \nablat \Vs \|_{\Lb}
\end{equation}
for all~$\vp,\vs \in \Xj[m]$.
\end{lemma}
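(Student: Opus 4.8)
The plan is to mirror the argument used for Lemma~\ref{l:VpVs}: I would first reduce the bilinear form over~$\Dd$ to an integral over the overlap~$\Oet$, then integrate by parts using that~$\Vs$ is harmonic there, and finally close the estimate with the Rellich-type inequality~(\ref{M19a}) that is already available for~$\Vs$.

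First I would localize. Since~$\vp \in \He$ vanishes (after extension by zero) outside~$\Oe$ and~$\Vs \in \Ht$ vanishes outside~$\Ot$, the two integrands meet only on the overlap, so
\begin{equation*}
\ip{\vp}{\Vs} = \int_{\Dd} \nabla \vp \cdot \nabla \Vs \, dx = \int_{\Oet} \nabla \vp \cdot \nabla \Vs \, dx.
\end{equation*}
Next, recalling from the discussion after~(\ref{eq:def_Vp}) that~$\Vs$ is harmonic in~$\Oet$, Green's first identity transfers the derivatives onto~$\Vs$ and annihilates the volume term:
\begin{equation*}
\int_{\Oet} \nabla \vp \cdot \nabla \Vs \, dx = \int_{\Get} \vp \, \ndiv \Vs \, dS.
\end{equation*}
The Cauchy--Schwarz inequality on~$\Get$ followed by~(\ref{M19a}) then gives $\bigl| \ip{\vp}{\Vs} \bigr| \le \| \vp \|_{\Lb} \, \| \ndiv \Vs \|_{\Lb} \le C \, \| \vp \|_{\Lb} \, \| \nablat \Vs \|_{\Lb}$, which is exactly~(\ref{eq:est_vpVs}). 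Note that it is essential to transfer the derivatives onto~$\Vs$ rather than onto~$\vp$: the latter is only an eigenfunction, not harmonic, so the corresponding volume term would not vanish and would leave an uncontrolled bulk contribution.

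The step requiring the most care is the integration by parts, since~$\Oet$ is only Lipschitz and~$\Vs$ is merely harmonic in~$\Oet$ rather than smooth up to~$\Get$. I would justify it exactly as in the preceding lemma, via the non-tangential maximal function control of Jerison--Kenig type (cf.~(\ref{eq:N_Rvpo})), which guarantees that the trace of~$\vp$ and the normal derivative~$\ndiv \Vs$ lie in~$L^2(\Get)$ and that Green's identity holds in the limit over a smooth exhaustion of~$\Oet$. I would also observe that~$\vp$ vanishes on the portion of~$\Get$ lying on~$\Ge$, so the boundary integral is effectively supported where~$\Get$ meets~$\Gt$; this is not needed for the stated bound but makes the boundary term transparently well defined.
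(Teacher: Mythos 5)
Your proposal is correct and follows essentially the same route as the paper: reduce $\ip{\vp}{\Vs}$ to an integral over $\Oet$, use the harmonicity of $\Vs$ there to write it as the boundary term $\int_{\Get} \vp \, \ndiv \Vs \, dS$, and conclude with Cauchy--Schwarz and the Rellich-type estimate~(\ref{M19a}). The extra remarks on justifying Green's identity via non-tangential maximal function bounds are consistent with, though left implicit in, the paper's two-line proof.
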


\begin{proof}
Since $\Vs$ is harmonic in $\Oet$, we have
\[
\ip{\vp}{\Vs} = \int_{\Get} \vp \ndiv \Vs \, dS.
\]
Using (\ref{M19a}) we arrive at (\ref{eq:est_vpVs}).
\end{proof}

\noindent Note that Lemma~\ref{l:vpVs} immediately implies that
\begin{equation}
\label{eq:est_Vpvs}
\bigl| \ip{\Vp}{\vs} \bigr| \leq C
\| \vs \|_{\Lb}
	\| \nablat \Vp  \|_{\Lb}
\end{equation}
for all~$\vp,\vs \in \Xj[m]$.

\begin{lemma}
\label{l:est_T}
Suppose that~$\Oe$ and~$\Ot$ are the same domains as in Lemma \ref{l:VpVs}.
Then
\begin{equation}
\label{eq:est_T}
\left| \ip{\T\vp}{\T\vs} - \int_{\Oe \setminus \Ot} \nabla \vp \cdot \nabla \vs \, dx \right|
\leq C \| \vs \|_{\Lb} \| \nablat \vp \|_{\Lb}
\end{equation}
for all~$\vp,\vs \in \Xj[m]$.
\end{lemma}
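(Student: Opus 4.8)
The plan is to reduce the left-hand side of~(\ref{eq:est_T}) to a boundary integral over~$\Gt$ and then invoke the same Rellich-type estimate~(\ref{M19a}) that powered Lemmas~\ref{l:VpVs} and~\ref{l:vpVs}, but now applied to a function harmonic in~$\Ot$. First I would record two structural facts about~$\T\vp = \vp - \S\vp$. Since~$\S\vp$ is the~$\ip{\cdot}{\cdot}$-orthogonal projection of~$\vp$ onto~$\Ht = H^1_0(\Ot)$, the difference satisfies~$\ip{\T\vp}{w} = 0$ for every~$w \in \Ht$, so~$\T\vp$ is harmonic in~$\Ot$; and because~$\S\vp \in H^1_0(\Ot)$ is extended by zero, we have~$\T\vp = \vp$ pointwise on~$\Dd \setminus \Ot$ and, as traces, on all of~$\Gt$. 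The same holds for~$\T\vs$.

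Next I would split the domain of integration. Writing~$\ip{\T\vp}{\T\vs} = \int_{\Ot}\nabla(\T\vp)\cdot\nabla(\T\vs)\,dx + \int_{\Dd\setminus\Ot}\nabla(\T\vp)\cdot\nabla(\T\vs)\,dx$ and using~$\T\vp = \vp$, $\T\vs = \vs$ on~$\Dd\setminus\Ot$ together with the fact that~$\vp,\vs$ vanish outside~$\Oe$, the second integral equals exactly~$\int_{\Oe\setminus\Ot}\nabla\vp\cdot\nabla\vs\,dx$. Hence the quantity to be estimated is precisely~$\int_{\Ot}\nabla(\T\vp)\cdot\nabla(\T\vs)\,dx$. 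Since~$\T\vp$ is harmonic in~$\Ot$, Green's formula turns this into the boundary integral~$\int_{\Gt}(\T\vs)\,\ndiv(\T\vp)\,dS$. On~$\Gt$ we have~$\T\vs = \vs$, whose trace is supported in~$\overline{\Oe}$, so only the portion~$\Gt\cap\overline{\Oe}$ contributes; moreover~$\nablat(\T\vp) = \nablat\vp$ there, because tangential gradients depend only on the boundary trace and the traces of~$\T\vp$ and~$\vp$ agree on~$\Gt$.

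Finally I would estimate. By Cauchy--Schwarz the boundary integral is bounded by~$\|\vs\|_{L^2(\Gt)}\,\|\ndiv(\T\vp)\|_{L^2(\Gt)}$, and the Rellich-type inequality underlying~(\ref{M19a}), now applied to~$\T\vp$ harmonic in~$\Ot$ with boundary~$\Gt$, gives~$\|\ndiv(\T\vp)\|_{L^2(\Gt)} \le C\|\nablat(\T\vp)\|_{L^2(\Gt)} = C\|\nablat\vp\|_{L^2(\Gt)}$. Since the traces of~$\vs$ and of~$\nablat\vp$ on~$\Gt$ are supported in~$\Gt\cap\overline{\Oe}\subset\Get$, I may replace the~$L^2(\Gt)$ norms by the~$\Lb$ norms (recall~$\Lb = L^2(\Get)$), which yields~(\ref{eq:est_T}).

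The main obstacle, and the reason for integrating over~$\Ot$ rather than over~$\Oet$ as in Lemma~\ref{l:VpVs}, is to keep the tangential gradient of~$\T\vp$ equal to~$\nablat\vp$ on the entire boundary where the integrand lives. Had I integrated over~$\Oet$, the part of~$\Get$ lying on~$\Ge$ would contribute~$\nablat(\T\vp) = -\nablat(\S\vp)$, which is not controlled by~$\nablat\vp$; choosing~$\Gt$ avoids this because~$\S\vp$ vanishes on all of~$\Gt$. The remaining technical point is the validity of Green's formula and of the Rellich estimate on~$\Gt$, which rests on the boundary regularity of harmonic functions in Lipschitz domains from~${\mathcal L}(M,\C_1,\ldots,\C_N)$ that underlies~(\ref{M19a}).
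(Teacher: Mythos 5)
Your argument is correct, and it takes a genuinely different (and in one respect cleaner) route than the paper's. The paper extracts the main term by splitting over $\Oe\setminus\Ot$ and the intersection $\Oet$, uses harmonicity of $\T\vp$ in $\Oet$, and applies Green's formula together with the Rellich-type bound~(\ref{M19a}) on the boundary $\Get$. You instead split over $\Ot$ and $\Dd\setminus\Ot$, use harmonicity of $\T\vp$ in all of $\Ot$, and run Green's formula and the Rellich estimate on $\Gt$. Both routes isolate the same main term $\int_{\Oe\setminus\Ot}\nabla\vp\cdot\nabla\vs\,dx$ and rely on the same analytic input (the normal-to-tangential gradient estimate for harmonic functions in Lipschitz domains of the class ${\mathcal L}(M,\C_1,\ldots,\C_N)$, plus Cauchy--Schwarz). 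What your choice of $\Gt$ buys is that the traces of $\T\vp$ and $\T\vs$ there coincide exactly with those of $\vp$ and $\vs$, since $\S\vp,\S\vs\in H^1_0(\Ot)$ vanish on $\Gt$; hence $\nablat(\T\vp)=\nablat\vp$ and $\T\vs=\vs$ on the entire relevant boundary, and the passage to the norms $\|\vs\|_{\Lb}$ and $\|\nablat\vp\|_{\Lb}$ is immediate because these traces are supported in $\Gt\cap\overline{\Oe}\subset\Get$. In the paper's version one must in addition account for the portion $\Get\cap\Ge$, where $\T\vp=-\S\vp$ and $\T\vs=-\S\vs$ (as $\vp$, $\vs$ and $\nablat\vp$ vanish on $\Ge$); this step is left implicit in the paper, and your decomposition avoids it altogether. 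The only price is that~(\ref{M19a}) is literally stated on $\Get$, so you invoke its analogue on $\Gt$; this is harmless since $\Ot$ belongs to the same uniform Lipschitz class, and you correctly flag it as the remaining technical point.
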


\begin{proof}
Let~$\Pvp = \T \vp = \vp - \S \vp$ and $\Psi = T\vs$. Since $\S \vp \in H_2$ and $\Phi$ is orthogonal to~$\Ht$, 
we have $\Phi = \vp$ in $\Oe \setminus \Ot$ and that~$\Phi$ is harmonic in $\Oet$. Therefore
\[
\int_{\Oe} \nabla \Phi \cdot \nabla \Psi \, dx = 
	\int_{\Oe \setminus \Ot} \nabla\Phi \cdot \nabla \Psi \, dx 
		+ \int_{\Oet} \nabla\Phi \cdot \nabla \Psi \,  dx.
\]
Since
\begin{equation*}
\int_{\Oet} \nabla \Pvp \cdot \nabla \Psi \, dx  =
	 \int_{\Get} \Psi \ndiv \Pvp \, dS,
\end{equation*}
we can use~(\ref{M19a}), which implies that~(\ref{eq:est_T}) holds.
\end{proof}

\begin{lemma}
\label{LM12a} 
Let $U\in H^1(\Oet)$ be harmonic in $\Oet$. Then
\begin{equation}\label{M8}
\int_{\Oet}|U|^2dx\leq C\int_{\Get}|U|^2dS.
\end{equation}
\end{lemma}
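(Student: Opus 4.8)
The plan is to control the interior $L^2$-norm of $U$ by its boundary values through a duality argument against an auxiliary Dirichlet problem, reducing everything to the same Lipschitz boundary regularity that was already used in Section~\ref{s:def_lipdom}. First I would introduce $w \in H^1_0(\Oet)$ as the unique weak solution of $-\Delta w = U$ in $\Oet$ with $w = 0$ on $\Get$; since $U \in L^2(\Oet) \subset H^{-1}(\Oet)$ this is well posed by Lax--Milgram. Testing the weak harmonicity of $U$ against $w \in H^1_0(\Oet)$ gives $\int_{\Oet} \nabla U \cdot \nabla w \, dx = 0$, so Green's first identity yields
\[
\int_{\Oet} |U|^2 \, dx = \int_{\Oet} U\,(-\Delta w)\,dx
 = \int_{\Oet}\nabla U\cdot\nabla w\,dx - \int_{\Get} U\, \ndiv w\, dS
 = -\int_{\Get} U\,\ndiv w\,dS .
\]
The boundary term is meaningful because $U|_{\Get}\in H^{1/2}(\Get)\subset L^2(\Get)$ as the trace of an $H^1$-function, and, as shown next, $\ndiv w \in L^2(\Get)$.

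By Cauchy--Schwarz, $\int_{\Oet}|U|^2\,dx \le \|U\|_{L^2(\Get)}\,\|\ndiv w\|_{L^2(\Get)}$, so it remains to prove the key estimate $\|\ndiv w\|_{L^2(\Get)} \le C\,\|U\|_{L^2(\Oet)}$. To this end I would mimic the splitting used for $\Rvp$ above: extend $U$ by zero to the ball $\Dd$ and let $w_0$ solve $-\Delta w_0 = U$ in $\Dd$ with $w_0 = 0$ on $\partial \Dd$, so that $w_0 \in H^2(\Dd)$ with $\|w_0\|_{H^2(\Dd)} \le C\|U\|_{L^2(\Oet)}$; consequently $w_0|_{\Get}\in H^1(\Get)$ and $\ndiv w_0\in L^2(\Get)$ with the corresponding bounds. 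The difference $w - w_0$ is harmonic in $\Oet$ with boundary data $-w_0|_{\Get}\in H^1(\Get)$, so the regularity estimate for the Dirichlet problem on Lipschitz domains (Jerison and Kenig, exactly as in~(\ref{eq:N_Rvpo})) gives $\int_{\Get}|\N(\nabla(w-w_0))|^2\,dS \le C\|w_0\|_{H^1(\Get)}^2 \le C\|U\|_{L^2(\Oet)}^2$, whence $\ndiv(w-w_0)\in L^2(\Get)$ with the same bound. Adding the two contributions produces the desired estimate for $\ndiv w$.

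Combining the above gives $\int_{\Oet}|U|^2\,dx \le C\,\|U\|_{L^2(\Get)}\,\|U\|_{L^2(\Oet)}$; dividing by $\|U\|_{L^2(\Oet)}$ (the case $U=0$ being trivial) and squaring yields~(\ref{M8}). The main obstacle is the normal-derivative estimate $\|\ndiv w\|_{L^2(\Get)}\le C\|U\|_{L^2(\Oet)}$: this is precisely where the Lipschitz regularity theory is indispensable, since on a merely Lipschitz boundary one cannot expect $H^2$-regularity of $w$ up to $\Get$ and must instead rely on the $L^2$-boundedness of the non-tangential maximal function of the gradient. A secondary technical point is the rigorous justification of Green's identity on the Lipschitz domain $\Oet$, which I would settle by the standard exhaustion by interior Lipschitz subdomains, controlled by the same maximal-function bounds and passing to the limit by dominated convergence.
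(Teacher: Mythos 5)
Your proof is correct, but it takes a genuinely different route from the paper. The paper argues directly: it invokes the $L^2$-solvability of the Dirichlet problem on Lipschitz domains to get the non-tangential maximal function bound $\int_{\Get}\N(U)^2\,dS\leq C\int_{\Get}|U|^2\,dS$, controls the boundary collar $\Omega^{2\delta}$ of $\Oet$ by this maximal function, and then controls the interior part by the collar via a Caccioppoli-type inequality. You instead run a duality argument: testing against the auxiliary problem $-\Delta w=U$, $w\in H^1_0(\Oet)$, reduces (\ref{M8}) to the bound $\|\ndiv w\|_{L^2(\Get)}\leq C\|U\|_{L^2(\Oet)}$, which you obtain by the same splitting the paper uses for $\Rvp$ (a smooth $H^2(\Dd)$ piece plus a harmonic remainder handled by the Jerison--Kenig regularity estimate, exactly as in (\ref{eq:N_Rvpo})). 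The trade-off is real: the paper needs only the $L^2$-Dirichlet estimate for $\N(U)$ itself (the more robust of the two Lipschitz-domain results) plus some interior bookkeeping, while your argument leans on the finer regularity problem for $\N(\nabla\cdot)$ with $H^1$ boundary data and on a careful justification of Green's identity on a Lipschitz domain --- both of which, however, are already part of the paper's toolbox, so nothing new is being assumed. Two points you should make explicit: the constant in your normal-derivative estimate depends only on the Lipschitz character $M$ and the covering $\C_1,\ldots,\C_N$ of $\Oet$ (hence is uniform in $d$, which is essential for the lemma's use), and the variational solution $w-w_0$ coincides with the layer-potential solution for which the non-tangential estimate is stated; both are routine but worth a sentence each.
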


\begin{proof} We start from the estimate
\begin{equation}
\label{M8a}
\int_{\Get}N(U)^2 \, dS \leq C \int_{\Get} |U|^2 \, dS,
\end{equation}
which is true because $U$ is harmonic in $\Oet$. Let
\[
\Omega^\delta=\{ x\in \Oet\; :\; \dist (x,\Get)<\delta\}.
\]
Now, using the Caccioppoli-inequality
\begin{equation*}
\int_{\Oet \setminus \Omega^{\delta}} |\nabla U|^2 \, dx 
	\leq C \delta^{-2} \int_{\Omega^{\delta}} |U|^2 \, dx,
\end{equation*}
we obtain that
\begin{equation}
\label{M8b}
\int_{\Oet \setminus \Omega^{2\delta}} |U|^2  \, dx 
	\leq C \delta^{-2} \int_{\Omega^{2\delta}} |U|^2 \, dx,
\end{equation}
which together with the fact that the integral in the right-hand side of~(\ref{M8b}) is estimated by a constant times 
the left-hand side of~(\ref{M8a}), provided that~$\delta$ is sufficiently small independently of~$d$, proves
that inequality~(\ref{M8}) holds.
\end{proof}

Let us now prove the assertion from Section~\ref{s:prelim} that we can choose~$\eps$ in~(\ref{eq:sigma})
as the small parameter~$\eps = d$.

\begin{lemma}
\label{l:close}
Let~\/$\Oe$ and~\/$\Ot$ be Lipschitz domains from ${\mathcal L}(M,\C_1,\ldots,\C_N)$.
Then
\begin{equation}
\label{eq:close}
\int_{\Dd} |\Se u - \St u|^2 \, dx \leq C d \, \| u \|_{H^1(\Dd)}.
\end{equation}
for all~$u \in H^1_0(\Dd)$.
\end{lemma}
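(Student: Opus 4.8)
The plan is to set $w = \Se u - \St u$ and analyze $w$ on the three regions $\Oet$, $\Oe \setminus \Ot$, and $\Ot \setminus \Oe$ (on $\Dd \setminus (\Oe \cup \Ot)$ both projections vanish, so $w=0$). Since $\Se u \in \He$ and $\St u \in \Ht$ are extended by zero, we have $w = \Se u$ on $\Oe \setminus \Ot$ and $w = -\St u$ on $\Ot \setminus \Oe$. The crucial observation is that $w$ is \emph{harmonic} in $\Oet$: by definition of the orthogonal projector, $\Se u - u$ is orthogonal to $\He$ and hence weakly harmonic in $\Oe \supset \Oet$, and likewise $\St u - u$ is harmonic in $\Ot \supset \Oet$, so $w = (\Se u - u) - (\St u - u)$ is harmonic in $\Oet$. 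Consequently
\[
\int_{\Dd} |w|^2\, dx = \int_{\Oet} |w|^2\, dx + \int_{\Oe \setminus \Ot} |\Se u|^2\, dx + \int_{\Ot \setminus \Oe} |\St u|^2\, dx,
\]
and I would bound the three terms separately.

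For the two collar terms I would use a Poincaré inequality in the thin direction. In each cylinder $\Ck$ the set $\Oe \setminus \Ot$ equals $\{ h_1 < y_n \le h_2 \}$ and has thickness at most $\widehat{d} \le C d$. Since $\Se u$ vanishes on $\Ge$, writing $\Se u(y', y_n) = \int_{h_1(y')}^{y_n} \partial_t \Se u(y', t)\, dt$ and applying Cauchy--Schwarz gives, after integrating in $y_n$ and $y'$ and summing over cylinders,
\[
\int_{\Oe \setminus \Ot} |\Se u|^2\, dx \le C d^2 \int_{\Oe \setminus \Ot} |\nabla \Se u|^2\, dx \le C d^2 \| \Se u \|^2 \le C d^2 \| u \|^2,
\]
where the last step uses that $\Se$ is norm-reducing ($\| \Se u \| \le \| u \|$); the term over $\Ot \setminus \Oe$ is symmetric. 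Both are $O(d^2)$ and hence negligible against the target $O(d)$.

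The main term, and the main obstacle, is $\int_{\Oet} |w|^2\, dx$. Here I would invoke Lemma~\ref{LM12a}, which applies since $w$ is harmonic and in $H^1(\Oet)$, to reduce to a boundary estimate $\int_{\Oet} |w|^2\, dx \le C \int_{\Get} |w|^2\, dS$. It then remains to control the trace of $w$ on $\Get = \{ y_n = h_{12}(y') \}$. On the part of $\Get$ where $h_2 > h_1$ the boundary coincides with $\Gt$, so $\St u = 0$ and $w = \Se u$; on the part where $h_1 > h_2$ it coincides with $\Ge$, so $\Se u = 0$ and $w = -\St u$. On the first part $\Se u$ vanishes on $\Ge$ at distance at most $\widehat{d} \le C d$, so the same one-dimensional Cauchy--Schwarz estimate, now evaluated at $y_n = h_2$ with $dS \le C\, dy'$ from the Lipschitz bound on $h_2$, yields
\[
\int_{\Get \,\cap\, \Gt} |\Se u|^2\, dS \le C d \int_{\Oe \setminus \Ot} |\nabla \Se u|^2\, dx \le C d\, \| u \|^2,
\]
and symmetrically on the part where $\Get$ lies on $\Ge$. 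Combining, $\int_{\Get} |w|^2\, dS \le C d\, \| u \|^2$, so $\int_{\Oet} |w|^2\, dx \le C d\, \| u \|^2$. Adding the three contributions gives $\int_{\Dd} |\Se u - \St u|^2\, dx \le C d\, \| u \|^2$, which is~(\ref{eq:close}) (with $\| u \|$ the Dirichlet norm, equivalent to $\| u \|_{H^1(\Dd)}$ on $H^1_0(\Dd)$) and shows one may take $\eps = d$ in~(\ref{eq:sigma}). The delicate point is precisely the passage from the interior $L^2$ norm to the boundary trace through Lemma~\ref{LM12a} together with the collar trace estimate; the bookkeeping of which portion of $\Get$ carries the trace of $\Se u$ and which carries that of $\St u$ must be done carefully, but the thinness $\widehat{d} \le C d$ of the collar is what drives every gain.
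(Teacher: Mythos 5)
Your proof is correct and follows essentially the same route as the paper's: the same decomposition of \(\Dd\) into \(\Oet\) and the two thin collars, the same reduction of the \(\Oet\) term to a trace estimate on \(\Get\) via Lemma~\ref{LM12a} (using harmonicity of \(w=\Se u-\St u\) there), and the same one-dimensional Cauchy--Schwarz estimates across the collar of thickness \(O(d)\) producing the factors \(d\) and \(d^2\). The only cosmetic difference is that you track \(\Se u\) and \(\St u\) separately on the two portions of \(\Get\), whereas the paper integrates \(\partial_s w\) from \(g_{12}=\min(h_1,h_2)\), where \(w\) itself vanishes.
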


\begin{proof}
Let~$w = \Se u - \St u$. Then~$w \in H_0^1(D)$ vanishes outside~$\Oe \cup \Ot$ and satisfies~$\Delta w = 0$ in~$\Oet$.
We start by proving that
\begin{equation}
\label{eq:est_w_cap}
\int_{\Oet} |w|^2 \, dx \leq C d \, \| w \|_{H^1(\Dd)}^2.
\end{equation}
By Lemma~\ref{LM12a}, the left-hand side of~(\ref{eq:est_w_cap}) is bounded by the squared~$L^2$-norm
of~$w$ along the boundary~$\Get$. Since~$w$ vanishes outside~$\Oe \cup \Oe$, 
it is clear that
\[
|w(x)| \leq \int_{g_{12}}^{h_{12}} |\partial_s w(x',s)| \, ds, \quad x \in \Get,
\]
where~$h_{12} = \max(h_1,h_2)$ and~$g_{12} = \min(h_1,h_2)$. Applying H\"older's inequality, we obtain that
\[
w^2 \leq C d \int_{g_{12}}^{h_{12}} |\nabla w(x',s)|^2  \, ds,
\] 
which implies that~(\ref{eq:est_w_cap}) holds. 

Let us now prove an inequality of Poincar\'{e}-Friedrichs type:
\begin{equation}
\label{eq:PF_w}
\int_{A} w^2 \, dx \leq C d^2 \int_{A} |\nabla w|^2 \, dx,
\end{equation}
where~$A = (\Oe \setminus \Ot) \cup (\Ot \setminus \Oe)$. 
Indeed, since~$w = 0$ outside~$\Oe \cup \Ot$, we obtain that
\[
\begin{aligned}
\int_{g_{12}}^{h_{12}} w^2(x', x_n) \, dx_n &= 
	2 \int_{g_{12}}^{h_{12}} \int_{g_{12}}^{x_n} w(x', s) \partial_s w(x',s) ds \, dx_n \\
&\leq C \int_{g_{12}}^{h_{12}} \left( \int_{g_{12}}^{x_n} w(x', s)^2 \, ds\right)^{1/2} 
		\left( \int_{g_{12}}^{x_n} |\nabla w(x',s)|^2 \, ds \right)^{1/2}  \, dx_n \\
&\leq C d \left( \int_{g_{12}}^{h_{12}} w(x', s)^2 \, ds\right)^{1/2} 
		\left( \int_{g_{12}}^{h_{12}} |\nabla w(x',s)|^2 \, ds \right)^{1/2}, \\
\end{aligned}
\]
where we used H\"older's inequality and the fact that~$|h_{12} - g_{12}| \leq C d$. This implies that
\[
\int_{B_{r_k}'(0)} \int_{g_{12}}^{h_{12}} w^2(x', x_n) \, dx_n dx'  \leq 
C d^2 \int_{B_{r_k}'(0)} \int_{g_{12}}^{h_{12}} |\nabla w(x', x_n)|^2 \, dx_n \, dx',
\]
which proves that~(\ref{eq:PF_w}) holds.
\end{proof}

\begin{lemma}
\label{L21a}
Let~$\vp \in \Xj[m]$ with~$\| \vp \|_{H^1(\Oe)} = 1$. Then
\begin{equation}
\label{M20}
\int_{\Get} \vp^2 dS \leq C d^2\;\;\mbox{and}\;\; \int_{\Get} |\nabla\vp|^2 dS \leq C.
\end{equation}
Moreover,
\begin{equation}
\label{M21}
\int_{\Oe \setminus \Ot} |\nabla\vp|^2 dx \leq C d.
\end{equation}
\end{lemma}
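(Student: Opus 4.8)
The plan is to split~$\vp = \vp_0 + \vp_1$ exactly as~$\Rvp = \Rvpo + \Rvpt$ was split in Section~\ref{s:def_lipdom}: let~$\vp_0$ solve~$-\Delta \vp_0 = \lm \vp$ in~$\Dd$ with~$\vp_0 = 0$ on~$\partial \Dd$, and set~$\vp_1 = \vp - \vp_0$. As in~(\ref{M7a}), elliptic regularity in the ball gives~$\vp_0 \in H^3(\Dd)$ with~$\| \vp_0 \|_{H^3(\Dd)} \leq C \| \vp \|_{H^1(\Dd)} = C$. The function~$\vp_1$ is harmonic in~$\Oe$ (since~$-\Delta \vp_1 = \lm\vp - \lm\vp = 0$) and satisfies~$\vp_1 = \vp - \vp_0 = -\vp_0$ on~$\Ge$ because~$\vp = 0$ there; applying the Jerison--Kenig estimate used for~(\ref{eq:N_Rvpo}) to~$\vp_1$ yields
\begin{equation}
\label{plan:N1}
\int_{\Ge} N(\nabla \vp_1)^2 \, dS \leq C \| \vp_0 \|_{H^1(\Ge)}^2 \leq C \| \vp_0 \|_{H^3(\Dd)}^2 \leq C.
\end{equation}
These two facts---the interior regularity of~$\vp_0$ and the boundary estimate~(\ref{plan:N1}) for the harmonic corrector---are the only inputs needed; all three bounds in the statement follow from them.

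To prove~(\ref{M21}) I would write, in each cylinder~$\Ck$ and with~$\widehat d \leq Cd$,
\[
\int_{\Oe \setminus \Ot} |\nabla \vp|^2 \, dx = \int_{B'_k} \int_{h_1(y')}^{h_2(y')} |\nabla \vp(y',s)|^2 \, ds \, dy',
\]
the inner integral being over the set where~$h_2(y') > h_1(y')$, so that the~$s$-interval has length at most~$Cd$. For the harmonic part each segment lies in a non-tangential cone of fixed aperture with vertex at~$(y', h_1(y')) \in \Ge$, so~$|\nabla \vp_1(y',s)| \leq N(\nabla \vp_1)(y',h_1(y'))$ and the~$\vp_1$-contribution is at most~$Cd \int_{\Ge} N(\nabla \vp_1)^2 \, dS \leq Cd$ by~(\ref{plan:N1}). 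For the regular part I would slice the layer into shifted copies~$\Ge + \tau e_n$, $0 < \tau < Cd$, and use the~$H^1$-trace estimate~$\int_{\Ge + \tau e_n} |\nabla \vp_0|^2 \, dS \leq C \| \vp_0 \|_{H^2(\Dd)}^2$ uniformly in~$\tau$, so that integrating in~$\tau$ gives a~$\vp_0$-contribution of at most~$Cd$. Adding the two contributions proves~(\ref{M21}).

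The first bound in~(\ref{M20}) then follows from~$\vp = 0$ on~$\Ge$: for a point~$(y',h_{12}(y')) \in \Get$ with~$h_{12} = h_2 > h_1$ one has~$\vp(y',h_{12}) = \int_{h_1(y')}^{h_{12}(y')} \partial_s \vp(y',s)\, ds$, so by the Cauchy--Schwarz inequality and~$h_{12} - h_1 \leq Cd$ we get~$\vp(y',h_{12})^2 \leq Cd \int_{h_1(y')}^{h_{12}(y')} |\nabla \vp(y',s)|^2 \, ds$; integrating over~$\Get$ (whose surface measure is comparable to~$dy'$ since~$h_{12}$ is Lipschitz) and invoking~(\ref{M21}) gives~$\int_{\Get} \vp^2 \, dS \leq Cd \int_{\Oe \setminus \Ot} |\nabla \vp|^2 \, dx \leq Cd^2$. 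For the second bound in~(\ref{M20}) I would again split~$\vp = \vp_0 + \vp_1$: the~$\vp_1$-term is estimated by bounding~$|\nabla \vp_1|$ on~$\Get$ by~$N(\nabla \vp_1)$ at the foot point on~$\Ge$ and using~(\ref{plan:N1}), while the~$\vp_0$-term is controlled by the~$H^1$-trace of~$\nabla \vp_0$ on the Lipschitz surface~$\Get$, namely~$\int_{\Get}|\nabla \vp_0|^2\, dS \leq C\|\vp_0\|_{H^2(\Dd)}^2 \leq C$.

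The main obstacle is the boundary estimate~(\ref{plan:N1}) for the harmonic corrector: in a merely Lipschitz domain~$\nabla \vp$ need not be bounded up to~$\Ge$, so the required control must come from the~$L^2$ Dirichlet theory for Lipschitz domains (Jerison--Kenig, Theorems~1 and~3 in~\cite{jerisonkenig81N}) rather than from pointwise gradient bounds. Since this is precisely the estimate already used for~$\Rvpt$ in~(\ref{eq:N_Rvpo}), no new machinery is needed, and the remaining work is the routine bookkeeping of the cone aperture and of the uniform trace constants on the nearby surfaces~$\Ge + \tau e_n$ and~$\Get$.
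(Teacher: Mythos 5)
Your decomposition $\vp=\vp_0+\vp_1$ is exactly the paper's $\vp=U+V$, and the three estimates are obtained from the same two inputs the paper uses: $H^3$-regularity of the ball solution and the Jerison--Kenig nontangential maximal function bound for the harmonic corrector with $H^1$ boundary data. The only cosmetic differences are that the paper packages the control of $\nabla\vp_0$ as an anisotropic $L^{\infty,\mathbf{2}}$ embedding rather than uniform traces on parallel surfaces, and derives the first bound in~(\ref{M20}) directly from the pointwise sup bound instead of routing it through~(\ref{M21}); both are equivalent, so the proposal is correct and essentially the paper's proof.
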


\begin{proof} 
We represent~$\vp$ as~$\varphi = U + V$, where
\[
-\Delta U = \lm \vp \;\; \mbox{in $\Dd$ and} \;\; U = 0 \;\; \mbox{on $\partial \Dd$}
\]
and
\[
\Delta V = 0 \;\; \mbox{on $\Oe$ and} \;\; V = -U \;\; \mbox{on $\Ge$.}
\]
Since $\vp \in H^1(\Oe)$, we have that~$U \in H^3(D)$ and that its norm is bounded 
by~$C\|\vp\|_{H^1(\Oe)}$. Using an embedding theorem for anisotropic spaces
(see Section~10 of~\cite{BIN}), we get~$U \in L^{\infty,{\bf 2}}({\mathcal C}_k)$
and~$|\nabla U|\in L^{\infty,{\bf 2}}({\mathcal C}_k)$, 
where~${\bf 2}=(2,\ldots,2)$ is an~$(n-1)$-vector, and~${\mathcal C}_k$, $k=1,\ldots,N$, are 
the same cylinders as in Section~\ref{SK1}. In the local coordinates $(y',s)$,
this means that
\begin{equation}
\label{M20a}
\int_{B'_{r_k}(0)} \sup_{|s| < \delta_k} |U(y',s)|^2 \, dy' < \infty
\end{equation}
and
\begin{equation}
\label{M20b}
\int_{B'_{r_k}(0)} \sup_{|s| < \delta_k} |\nabla U(y',s)|^2 \, dy' < \infty.
\end{equation}
Moreover, the above integrals are estimated by~$C\| \vp \|^2_{H^1(\Oe)}$.

Furthermore, since~$V$ and~$\nablat V$ belong to $L^2(\partial\Oe)$, we have that
\begin{equation}
\label{M20c}
\int_{\Ge} (N(V)^2 + N(\nabla V)^2) dS \;\; \mbox{is bounded and $\leq C\| \vp \|^2_{H^1(\Oe)}$}.
\end{equation}
Inequalities~(\ref{M20b}) and~(\ref{M20c}) imply
\begin{equation}
\label{M21a}
\int_{B'_{r_k}(0)} \sup_{|s| < \delta_k} |\nabla \vp(y',s)|^2 \, dy' < \infty.
\end{equation}
Therefore
\[
\begin{aligned}
\int_{B'_{r_k}(0)} \vp^2(y',h_{12}(y')) \, dy'
	&= 2\int_{B'_{r_k}(0)} \int_{h_1}^{h_{12}} |\partial_s \vp(y',s)| \, |\vp(y',s)| ds \, dy'\\
&\leq C d^2 \int_{B'_{r_k}(0)} \sup_{|s| < \delta_k} |\nabla \vp(y',s)|^2 \, dy'
\end{aligned}
\]
and the right hand side is bounded by~$Cd^2 \| \vp \|_{H^1(\Oe)}$ because of~(\ref{M20b}) 
and~(\ref{M20c}). This proves the first inequality in~(\ref{M20}). 
The second inequality in~(\ref{M20}) and~(\ref{M21}) follows from~(\ref{M21a}).
\end{proof}

\noindent The next lemma asserts that similar estimates are valid for the function $\Rvp$.

\begin{lemma}
\label{L21b}
Let~$\vp \in \Xj[m]$ with~$\| \vp \|_{H^1(\Oe)} = 1$. Then
\begin{equation}
\label{M20k}
\int_{\Get} \Rvp^2 \, dS \leq Cd^2 \;\; \mbox{and} \;\; \int_{\Get}|\nabla \Rvp|^2 \, dS \leq C.
\end{equation}
Moreover,
\begin{equation}
\label{M21l}
\int_{\Ot \setminus \Oe} |\nabla \Rvp|^2 \, dx \leq C d.
\end{equation}
\end{lemma}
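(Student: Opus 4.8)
The plan is to follow the structure of the proof of Lemma~\ref{L21a}, replacing~$\vp$ by~$\Rvp$ and exploiting the decomposition~$\Rvp = \Rvpo + \Rvpt$ together with the bounds already recorded for the two pieces. The crucial intermediate step will be the analogue of~(\ref{M21a}), namely the vertical non-tangential estimate
\[
\int_{B'_{r_k}(0)} \sup_{|s| < \delta_k} |\nabla \Rvp(y',s)|^2 \, dy' \leq C,
\]
written in the local coordinates~$(y',s)$ of each cylinder~$\C_k$. Once this is secured, the three assertions follow by the same one-dimensional integration arguments used for~$\vp$, with one essential modification: whereas~$\vp$ vanishes on~$\Ge$ (that is, at~$y_n = h_1$), the function~$\Rvp$ vanishes on~$\Gt$ (at~$y_n = h_2$), so every vertical integration must start from~$h_2$ rather than~$h_1$.

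To establish the displayed sup estimate I would treat~$\Rvpo$ and~$\Rvpt$ separately. For the smooth part, the bound~(\ref{M7a}) gives~$\Rvpo \in H^3(\Dd)$ with~$\|\Rvpo\|_{H^3(\Dd)} \leq C$; feeding this into the anisotropic embedding theorem exactly as in the treatment of~$U$ in Lemma~\ref{L21a} yields~$\int_{B'_{r_k}} \sup_{|s|<\delta_k} |\nabla \Rvpo|^2 \, dy' \leq C$, and likewise for~$\Rvpo$ itself (cf.~(\ref{M20a})--(\ref{M20b})). For the harmonic part, the non-tangential maximal function bound~(\ref{eq:N_Rvpo}), i.e.~$\int_{\Gt} (N(\Rvpt)^2 + N(\nabla \Rvpt)^2)\,dS \leq C$, plays the role that~(\ref{M20c}) played for~$V$: since a vertical segment~$\{(y',s)\}$ inside~$\Ot$ lies, up to a bounded horizontal shift, in the non-tangential cone of a nearby boundary point, the Lipschitz geometry converts these surface bounds into~$\int_{B'_{r_k}} \sup_{|s|<\delta_k} |\nabla \Rvpt|^2\,dy' \leq C$. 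Adding the two contributions gives the required estimate.

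With this in hand, the first inequality in~(\ref{M20k}) follows by writing, for~$y_n = h_{12}(y') = \max(h_1,h_2)$ and using~$\Rvp(y',h_2) = 0$,
\[
\Rvp^2(y',h_{12}) = 2 \int_{h_2}^{h_{12}} \Rvp \, \partial_s \Rvp \, ds \leq C \, (h_{12}-h_2) \sup_{|s|<\delta_k} |\nabla \Rvp(y',s)|^2,
\]
and then integrating over~$B'_{r_k}$; the key point is that~$h_{12} - h_2 = (h_1 - h_2)_+ \leq \widehat{d} \leq C d$, which produces the factor~$d^2$ after the same bookkeeping as in Lemma~\ref{L21a}. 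The second inequality in~(\ref{M20k}) is immediate from the sup estimate and the Lipschitz bound on~$\nabla h_{12}$, since~$\int_{\Get} |\nabla \Rvp|^2\,dS \leq C \int_{B'_{r_k}} \sup_{|s|<\delta_k} |\nabla \Rvp|^2\,dy'$. Finally,~(\ref{M21l}) follows because, in each cylinder,~$\Ot \setminus \Oe$ is the slab~$\{h_2 < y_n \leq h_1\}$ of vertical thickness~$(h_1 - h_2)_+ \leq C d$, so~$\int_{\Ot \setminus \Oe} |\nabla \Rvp|^2\,dx \leq C d \int_{B'_{r_k}} \sup_{|s|<\delta_k} |\nabla \Rvp|^2\,dy' \leq C d$.

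I expect the main obstacle to be the rigorous passage from the non-tangential maximal function bound~(\ref{eq:N_Rvpo}), which controls suprema over interior cones anchored at boundary points, to the pointwise supremum~$\sup_{|s|<\delta_k} |\nabla \Rvpt(y',s)|$ along a fixed vertical line: one must verify that for a Lipschitz domain such a segment is captured inside the non-tangential approach region of a suitable boundary point, so that replacing the boundary integral by an integral over~$B'_{r_k}$ costs only a constant depending on~$M$. The remaining work is the one-dimensional calculus already carried out for~$\vp$, the only genuinely new input being that~$\Rvp$ vanishes on~$\Gt$ rather than on~$\Ge$, which changes the relevant vertical gap from~$h_{12}-h_1$ to~$h_{12}-h_2$, both of which are~$O(d)$.
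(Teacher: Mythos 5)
Your proposal is correct and follows essentially the same route as the paper, which represents $\Rvp = U + V$ with $U=\Rvpo$ (the $H^3$ solution on $\Dd$) and $V=\Rvpt$ (harmonic in $\Ot$ with boundary data $-\Rvpo$) and then declares the rest ``analogous to Lemma~\ref{L21a}''; you correctly supply the one genuinely new point, namely that $\Rvp$ vanishes on $\Gt$ so the vertical integration runs from $h_2$ to $h_{12}$ over a gap of length $O(d)$. The only blemish is that your displayed bound $\Rvp^2(y',h_{12}) \leq C\,(h_{12}-h_2)\sup|\nabla\Rvp|^2$ as written yields only one factor of $d$; the second factor comes from also using $|\Rvp(y',s)| \leq (s-h_2)\sup|\nabla\Rvp|$ on the segment, which is exactly the ``bookkeeping'' you refer to, so the argument stands.
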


\begin{proof}
We represent $\Rvp$ as $\Rvp = U + V$, where~$U$ is the same as in the proof of the previous lemma, and
\[
\Delta V=0 \;\; \mbox{in $\Ot$ and} \;\; V = -U \;\; \mbox{on $\Gt$.}
\]
The remaining part of the proof is analogous with the proof of Lemma~\ref{L21a}.
\end{proof}

\begin{corollary} Let~$\vp \in \Xj[m]$ with~$\| \vp \|_{L^2(\Oe)} = 1$. Then
\begin{equation}
\label{M12c}
\rho=O(d^2)\,.
\end{equation}
\end{corollary}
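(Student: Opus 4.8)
The plan is to bound separately the three contributions to~$\rr$ in~(\ref{eq:rho}) and to show that each is~$O(d^2)$, uniformly over the unit sphere of~$\Xj[m]$. First note that the normalization is essentially immaterial: for an eigenfunction~$\vp\in\Xj[m]$ one has~$\|\vp\|^2=\ip{\vp}{\vp}=\lm[m]\Lip{\vp}{\vp}=\lm[m]\,|\vp|^2$, so~$\|\vp\|_{L^2(\Oe)}=1$ differs from~$\|\vp\|=1$ only by the fixed factor~$\lm[m]$, which may be absorbed into the constants; since~$\Xj[m]$ is finite dimensional, the estimates below are automatically uniform. It therefore suffices to prove that~$|\T\vp|^2$, $|\Vp|^2$ and~$\eps\|\Vp\|^2=d\,\|\Vp\|^2$ are all~$O(d^2)$.

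I would first dispose of the two terms built from~$\Vp$. Since~$\Vp\in\Ht$ vanishes outside~$\Ot$ and~$\Vp=\vp-\Rvp$ with~$\vp=0$ on~$\Ot\setminus\Oe$, I split each integral over~$\Oet$ and~$\Ot\setminus\Oe$. On the sliver~$\Ot\setminus\Oe$ we have~$\Vp=-\Rvp$, so~$\int_{\Ot\setminus\Oe}|\nabla\Vp|^2\,dx\le Cd$ by~(\ref{M21l}), while a Poincar\'e--Friedrichs inequality in the sliver analogous to~(\ref{eq:PF_w}), using that~$\Rvp=0$ on~$\Gt$ and that the sliver has width~$O(d)$, gives~$\int_{\Ot\setminus\Oe}|\Vp|^2\,dx\le Cd^2$. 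On~$\Oet$ the function~$\Vp$ is harmonic, so Green's formula together with~(\ref{M19a}) yields~$\int_{\Oet}|\nabla\Vp|^2\,dx\le C\,\|\Vp\|_{\Lb}\,\|\nablat\Vp\|_{\Lb}$, and the boundary bounds~(\ref{M20}) and~(\ref{M20k}) applied to~$\Vp=\vp-\Rvp$ give~$\|\Vp\|_{\Lb}\le Cd$ and~$\|\nablat\Vp\|_{\Lb}\le C$, whence~$\int_{\Oet}|\nabla\Vp|^2\,dx\le Cd$. Adding the two regions gives~$\|\Vp\|^2\le Cd$, so~$\eps\|\Vp\|^2\le Cd^2$; and Lemma~\ref{LM12a} gives~$\int_{\Oet}|\Vp|^2\,dx\le C\int_{\Get}|\Vp|^2\,dS\le Cd^2$, which together with the sliver bound yields~$|\Vp|^2\le Cd^2$.

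The remaining term~$|\T\vp|^2$ is where I expect the main obstacle, because~$\T\vp=\vp-\S\vp$ involves the projection~$\S\vp$, which is not directly controlled by the earlier lemmas. The key observation that removes this difficulty is that~$\T\vp\perp\Ht$, so~$\T\vp$ is harmonic in all of~$\Ot$, and since~$\S\vp$ has zero trace on~$\Gt$ the trace of~$\T\vp$ there equals~$\vp|_{\Gt}$. Running the argument of Lemma~\ref{LM12a} on~$\Ot$ in place of~$\Oet$ (legitimate since~$\Ot$ is Lipschitz with the same constants and~$\T\vp$ is harmonic there), I obtain~$\int_{\Ot}|\T\vp|^2\,dx\le C\int_{\Gt}|\T\vp|^2\,dS=C\int_{\Gt}\vp^2\,dS$, and the argument of Lemma~\ref{L21a}, applied on~$\Gt$ instead of~$\Get$, gives~$\int_{\Gt}\vp^2\,dS\le Cd^2$. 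On the remaining sliver~$\Oe\setminus\Ot$ we have~$\T\vp=\vp$, which vanishes on~$\Ge$, so a Poincar\'e estimate as in~(\ref{eq:PF_w}) together with~(\ref{M21}) yields~$\int_{\Oe\setminus\Ot}|\T\vp|^2\,dx\le Cd^3$. Adding these contributions gives~$|\T\vp|^2\le Cd^2$, and combining the three estimates proves~$\rr=O(d^2)$.
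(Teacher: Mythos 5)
Your argument is correct and follows the paper's own proof in all essentials: the same three-way splitting of $\rr$, Lemma~\ref{LM12a} combined with the boundary estimates of Lemmas~\ref{L21a} and~\ref{L21b} for the $L^2$-norms, the Green's-formula bound~(\ref{eq:est_VpVs}) for $\|\Vp\|^2$, and Lemma~\ref{l:close} to take $\eps \approx d$. The only deviation is minor (and arguably cleaner): you treat $\T\vp$ as a harmonic function on all of $\Ot$ with trace $\vp|_{\Gt}$, whereas the paper applies Lemma~\ref{LM12a} on $\Oet$ and controls $\T\vp$ on $\Get$ via the identity $\T\vp=\vp$ in $\Oe\setminus\Ot$.
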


\begin{proof}
Since $\T\vp$ is harmonic in~$\Oet$, Lemma~\ref{LM12a} implies that
\[
\| \T \vp \|_{L^2(\Oet)}^2 \leq C \int_{\Get} \vp^2 \, dS,
\]
where we used the relation~$\T\vp = \vp$ in~$\Oe \setminus \Ot$.
By~(\ref{M20}) we get
\begin{equation}
\label{M21aa}
\| \T \vp \|_{L^2(\Oet)}^2 \leq C d^2.
\end{equation}
From (\ref{M21}) we derive
\[
\int_{\Oe \setminus \Ot} \vp^2 \, dx = O(d^3),
\]
which together with~(\ref{M21aa}) gives
\begin{equation}
\label{M21ab}
\| \T \vp \|_{L^2(\Oe)}^2 \leq C d^2.
\end{equation}

Since $\Vp = \vp - \Rvp$, using Lemmas~\ref{L21a} and~\ref{L21b}, we get
\begin{equation}
\label{M21ac}
\| \Vp \|_{L^2(\Ot)}^2 \leq C d^2,
\end{equation}
and from~(\ref{eq:est_VpVs}) and Lemma~\ref{L21b}, we obtain
\begin{equation}
\label{QQq}
\int_{\Ot}|\nabla \Vp|^2 \, dx \leq Cd.
\end{equation}
By the definition of~$\rr$ in~(\ref{eq:rho}), estimates~(\ref{M21ab}),~(\ref{M21ac}) and~(\ref{QQq}),
and finally Lemma~\ref{l:close}, we obtain that~(\ref{M12c}) holds.
\end{proof}

Another consequence of the above analysis is the following
estimate for the eigenvalues for Lipschitz domains.

\begin{corollary} Let~$\Oe$ and $\Ot$ be Lipschitz domains from~${\mathcal L}(M,\C_1,\ldots,\C_N)$.
Then estimate {\rm (\ref{JiJ})} holds.
\end{corollary}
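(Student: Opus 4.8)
The plan is to feed the abstract asymptotic formula of Theorem~\ref{t:kozlov2013} with the estimates already collected in this section. By the previous corollary we have~$\rr = O(d^2)$ from~(\ref{M12c}), and by Lemma~\ref{l:close} we may take~$\eps = Cd$ in~(\ref{eq:sigma}). Substituting both into~(\ref{M28s}) reduces everything to the single bound~$|\tk| \leq Cd$ for the eigenvalues of~(\ref{eq:main_asymp_D}), because then
\[
\bigl| \um[k]^{-1} - \lm[m]^{-1} \bigr| \leq |\tk| + C(\rr + |\tk|\eps) \leq Cd + C(d^2 + d^2) \leq C'd .
\]

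Thus the heart of the proof is to show~$|\tk| = O(d)$. I would regard~(\ref{eq:main_asymp_D}) as a generalized eigenvalue problem on the finite-dimensional space~$\Xj[m]$, with the symmetric bilinear form
\[
A(\vp,\vs) = \lm^{-1}\bigl(\ip{\Vp}{\Vs} - \ip{\T\vp}{\T\vs} - \ip{\Vp}{\vs} - \ip{\vp}{\Vs}\bigr)
\]
on the left and the symmetric form~$B(\vp,\vs) = \ip{\S\vp}{\S\vs}$ on the right. For a symmetric~$A$ and a positive-definite~$B$ the eigenvalues are real and obey~$\max_k |\tk| \leq \sup_{\vp \neq 0} |A(\vp,\vp)|/B(\vp,\vp)$, so it suffices to bound the numerator by~$Cd$ and the denominator from below by a positive constant (after normalizing~$\|\vp\|_{L^2(\Oe)} = 1$).

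For the numerator I would estimate the four terms of~$A(\vp,\vp)$ one by one. Lemma~\ref{l:VpVs} together with~(\ref{M21l}) gives~$\ip{\Vp}{\Vp} = O(d)$; Lemma~\ref{l:est_T} together with~(\ref{M21}) gives~$\ip{\T\vp}{\T\vp} = O(d)$; and Lemma~\ref{l:vpVs} with~(\ref{eq:est_Vpvs}) gives~$|\ip{\Vp}{\vp}|,\,|\ip{\vp}{\Vp}| = O(d)$. Throughout, the decisive inputs are the boundary estimates~$\|\vp\|_{\Lb},\|\Rvp\|_{\Lb} = O(d)$ and~$\|\nablat\vp\|_{\Lb},\|\nablat\Rvp\|_{\Lb} = O(1)$ from Lemmas~\ref{L21a} and~\ref{L21b}, which via~$\Vp = \vp - \Rvp$ yield~$\|\Vp\|_{\Lb} = O(d)$ and~$\|\nablat\Vp\|_{\Lb} = O(1)$; each product such as~$\|\Vp\|_{\Lb}\|\nablat\Vp\|_{\Lb}$ then carries exactly one factor of~$d$. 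Hence~$|A(\vp,\vp)| \leq Cd$. For the denominator I would use the orthogonal splitting~$\|\vp\|^2 = \|\S\vp\|^2 + \|\T\vp\|^2$ and the eigenvalue identity~$\|\vp\|^2 = \lm\|\vp\|_{L^2(\Oe)}^2 = \lm$, so that~$B(\vp,\vp) = \|\S\vp\|^2 = \lm - \ip{\T\vp}{\T\vp} = \lm - O(d)$, which stays bounded below once~$d$ is small. Combining the two estimates gives~$|\tk| \leq Cd$.

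I expect the term-by-term estimation of~$A(\vp,\vp)$ to be the main obstacle, precisely because one must track the~$\Lb = L^2(\Get)$ boundary norms carefully enough that every term genuinely produces a factor~$d$ rather than merely~$O(1)$; this is where Lemmas~\ref{L21a} and~\ref{L21b} do the real work. Once~$|\tk| = O(d)$ is established, the conclusion~(\ref{JiJ}) follows by returning from reciprocals to the eigenvalues: since~$\um[k]$ remains in a fixed neighborhood of the positive number~$\lm[m]$, we have
\[
|\um[k] - \lm[m]| = \um[k]\,\lm[m]\,\bigl|\um[k]^{-1} - \lm[m]^{-1}\bigr| \leq C d,
\]
and recalling that~$\um[k] = \Lum[k]$ and~$\lm[m] = \Llm[m]$ completes the proof.
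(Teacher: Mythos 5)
Your proposal is correct and follows essentially the same route as the paper: both arguments feed the boundary estimates of Lemmas~\ref{L21a} and~\ref{L21b} (giving $\|\vp\|_{\Lb}, \|\Rvp\|_{\Lb} = O(d)$ and $O(1)$ gradient bounds) into Lemmas~\ref{l:VpVs}--\ref{l:est_T} together with~(\ref{M21}),~(\ref{M21l}) and~(\ref{QQq}) to get $\tk = O(d)$, verify that $\ip{\S\vp}{\S\vs}$ stays close to the nondegenerate form $\ip{\vp}{\vs}$, and then combine with $\rr = O(d^2)$ in~(\ref{M28s}). Your explicit use of the Rayleigh-quotient bound for the symmetric pencil and the final passage from reciprocals back to the eigenvalues only make explicit what the paper leaves implicit.
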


\begin{proof} 
By~(\ref{eq:est_vpVs}) and~(\ref{eq:est_Vpvs}) combined with the first formula in~(\ref{M20}) 
and the last formula in~(\ref{M20k}), we get
\[
|\ip{\vp}{\Vs}| + |\ip{\vs}{\Vp}| = O(d).
\]
From~(\ref{eq:est_T}) and~(\ref{M20}), it follows that
\[
|\ip{\T\vp}{\T\vs}| = O(d)
\]
and
\[
|\ip{\S\vp}{\S\vs} - \ip{\vp}{\vs}| = O(d).
\]
Taking into account~(\ref{QQq}), we get $\tk = O(d)$, 
which together with~(\ref{M12c}) leads to~(\ref{JiJ}).
\end{proof}

\section{Two lemmas}

In the case that the boundary~$\Ge$ of~$\Oe$ is of class~$C^1$ (or~$C^{1,\alpha}$), it is more convenient to use 
another equivalent (for small $d$) description of the closeness of~$\Oe$ and~$\Ot$:
there exists a positive constant~$\delta_0$ such that for every~$P \in \Ge$,
in a cartesian coordinate system~$y=(y',y_n)$ with the center at~$P$ and the tangent plane to 
the boundary~$\Ge$ at~$P$ given by $y_n = 0$, the domain is given by
\[
\Oe \cap {\mathcal C}_{\delta_0,\delta_0}(P) = 
	\{ y \in {\mathcal C}_{\delta_0,\delta_0}(P) \,:\, y_n > h_1(y'), \, y' \in B'_{\delta_0}(P) \},
\]
where
\begin{equation}
\label{M31a}
{\mathcal C}_{\delta,\sigma} = \{ y  \, : \,  y' \in B'_{\delta}(P), \, |y_n| < \sigma \}.
\end{equation}
If $\sigma=\delta$ we shall use the notation
${\mathcal C}_{\delta}$ for ${\mathcal C}_{\delta,\delta}$.  The function $h_1 \in C^1$ satisfies
\begin{equation}\label{M23a}
h_1(0) = 0 \;\; \mbox{and} \;\; \nabla_{y'}h_1(0) = 0.
\end{equation}
The analogous representation is valid for $\Ot$ (certainly in this case we do not assume (\ref{M23a})) and
\begin{equation}
\label{M23b}
\sup_{B'_{\delta_0}(P)} |h_2 - h_1| = O(d) \;\; \mbox{and} \;\; 
	\sup_{B'_{\delta_0}(P)} |\nabla_{y'}(h_2 - h_1)| = o(1).
\end{equation}
In the case of a~$C^{1,\alpha}$-perturbation, we assume that~$h_1$ is of class $C^{1,\alpha}$ and the 
relations in~(\ref{M23b}) are replaced by
\begin{equation}
\label{M23c}
\sup_{B'_{\delta_0}(P)} |h_2 - h_1| = O(d) \;\; \mbox{and} \;\; 
	\sup_{B'_{\delta_0}(P)} |\nabla_{y'}(h_2 - h_1)| = O(d^\alpha).
\end{equation}
As before we use the notation $h_{12}=\max (h_1,h_2)$, which defines the domain~$\Oet$ in the local 
coordinates in~${\mathcal C}_{\delta_0,\delta_0}(P)$.

In what follows we shall use the following embedding assertion. 
If~$u\in H^1(\Dd)$ and the cylinder~${\mathcal C}_{\delta,\sigma}$ given by~(\ref{M31a})
in a certain cartesian coordinate system belongs to~$\Dd$, then, by Theorem~10.2 in~\cite{BIN},

\begin{equation}
\label{M31b}
\int_{B'_\delta} \sup_{|y_n| < \sigma} |u(y',y_n)|^2 \, dy' \leq C \| u \|^2_{H^1(\Dd)}.
\end{equation}

\begin{lemma}
\label{l:Phi}
Let~$\Oe$ and~$\Ot$ satisfy the~$C^1$-assumption.
Then the solution to~$\Delta U = f$ in~$\Oe$ and~$U = g$ on~$\Ge$,~$f\in L^2(\Oe)$ and~$g \in L^2(\Ge)$, satisfies
\begin{equation}
\label{eq:Phi}
\begin{aligned}
\int_{\Get \cap {\mathcal C}_\delta(P) } |U|^2 dS {} & \leq 
	C \Big( \int_{\Ge \cap {\mathcal C}_{\delta}(P)} |g|^2 \, dS
		+\int_{\Oe\cap {\mathcal C}_{\delta}(P)} |f|^2 \, dx\Big)\\
	&+ C d \Big(\int_{\Oe} |f|^2 \, dx+\int_{\Ge} |g|^2 \, dS\Big),
\end{aligned}
\end{equation}
where $P \in\Ge$, $\delta \leq \delta_0$ and~$C$ is a constant independent of~$\delta$,~$f$ and~$g$.
\end{lemma}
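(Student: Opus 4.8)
The plan is to split $U$ into the part generated by the right-hand side $f$ and the harmonic part carrying the boundary data $g$, and to exploit throughout that $\Get$ lies in a band of width $O(d)$ around $\Ge$. Concretely, I would write $U = W + h$, where $W \in H^1_0(\Oe)$ is the finite-energy solution of $\Delta W = f$ in $\Oe$ with $W = 0$ on $\Ge$, and $h$ is harmonic in $\Oe$ with $h = g$ on $\Ge$. Taking $W$ with homogeneous data on $\Ge$ (rather than a Newtonian potential on $\Dd$) is what makes the band argument produce a genuine factor $d$: $W$ vanishes on $\Ge$ and has globally controlled energy, so its trace on the nearby surface $\Get$ is automatically small.

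For the volume part, the point is that $W = 0$ on $\Ge$ while $\Get$ sits at distance at most $Cd$ from $\Ge$. Writing $\Ge$ and $\Get$ in the local coordinates as the graphs $y_n = h_1(y')$ and $y_n = h_{12}(y')$ with $|h_{12}-h_1| \le Cd$, the fundamental theorem of calculus along vertical segments gives $W(y',h_{12}(y')) = \int_{h_1(y')}^{h_{12}(y')} \partial_s W(y',s)\,ds$ for a.e.\ $y' \in B'_\delta(P)$; Cauchy--Schwarz together with $|h_{12}-h_1|\le Cd$ then yields
\[
\int_{\Get \cap \mathcal{C}_\delta(P)} |W|^2 \, dS \le C d \int_{\Oe} |\nabla W|^2 \, dx .
\]
Since $W \in H^1_0(\Oe)$ solves $\Delta W = f$, the energy identity and the Poincar\'e inequality give $\int_{\Oe} |\nabla W|^2\,dx \le C \int_{\Oe} |f|^2\,dx$, so the whole contribution of $W$ is absorbed into the term $Cd\int_{\Oe}|f|^2\,dx$ of~(\ref{eq:Phi}); in particular the local term $\int_{\Oe\cap\mathcal{C}_\delta(P)}|f|^2\,dx$ is not even needed for this part.

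For the harmonic part I would use the Jerison--Kenig theory on $C^1$-domains, exactly as in~(\ref{eq:N_Rvpo}), namely $\int_{\Ge}\N(h)^2\,dS \le C\int_{\Ge}|g|^2\,dS$. Because every point of $\Get\cap\mathcal{C}_\delta(P)$ is reached nontangentially from its vertical projection on $\Ge$ (the cone condition holds since $C^1$-flatness forces $|\nabla h_1|$ to be small near $P$) and lies in the collar $\{\dist(\cdot,\Ge)\le Cd\}$, one dominates $|h|$ on $\Get$ by the truncated nontangential maximal function of $h$ over $\Ge\cap\mathcal{C}_\delta(P)$. Splitting $g = g\,\mathbf{1}_{\mathcal{C}_{2\delta}(P)} + g\,\mathbf{1}_{\mathcal{C}_{2\delta}(P)^c}$, the near part is controlled by $C\int_{\Ge\cap\mathcal{C}_{2\delta}(P)}|g|^2\,dS$ through the maximal-function bound applied to its harmonic extension, while the far part $h_{\mathrm{far}}$ is harmonic and vanishes on $\Ge\cap\mathcal{C}_{2\delta}(P)$; boundary regularity for harmonic functions vanishing on a $C^1$-piece then gives $|h_{\mathrm{far}}| \le Cd\,\|g\|_{L^2(\Ge)}$ on the $Cd$-collar over $B'_\delta(P)$, which contributes at most $Cd\int_{\Ge}|g|^2\,dS$. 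Adding the two parts and combining with the bound for $W$ gives~(\ref{eq:Phi}).

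I expect the harmonic part to be the main obstacle. The conversion of the separation $\dist(\Get,\Ge)\le Cd$ into the small prefactor $d$ in front of the nonlocal contribution rests on the uniform $C^1$-estimates (uniform nontangential maximal-function bounds and uniform boundary gradient bounds for harmonic functions vanishing on a $C^1$-piece) and on the flatness near $P$ guaranteeing nontangential accessibility of $\Get$ from $\Ge$; this is precisely the step that fails in the merely Lipschitz setting. The passage between the cylinders $\mathcal{C}_\delta(P)$ and $\mathcal{C}_{2\delta}(P)$ on the two sides is a routine, if slightly delicate, covering argument, and the constant it produces is uniform in $P$ and stable for $\delta$ comparable to $\delta_0$, which is the regime in which the lemma is subsequently applied.
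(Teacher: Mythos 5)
Your decomposition is genuinely different from the paper's, and much of it is sound. The paper does not separate $f$ from $g$: it localizes \emph{both} data, setting $f_\delta=f$ on $\mathcal{C}_\delta(P)$ and $g_\delta=g$ on $\Ge\cap\mathcal{C}_\delta(P)$ (zero elsewhere), writes $U=U_\delta+U_r$ accordingly, bounds $\int_{\Get}|U_\delta|^2\,dS$ by the \emph{local} data norms through an $H^2$-bound for the Newtonian part plus the nontangential maximal function estimate (\ref{M4b}) for the harmonic part, and then applies the vertical fundamental-theorem-of-calculus argument over the band between $\Ge$ and $\Get$ to the remainder $U_r$, which vanishes on $\Ge\cap\mathcal{C}_\delta(P)$; this costs $Cd$ times a global energy bound and yields the second bracket in (\ref{eq:Phi}). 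Your treatment of $W$ is exactly this band mechanism in its cleanest setting ($W\in H^1_0(\Oe)$, so $\int_{\Oe}|\nabla W|^2\,dx\le C\int_{\Oe}|f|^2\,dx$ is a genuine energy estimate), and it even shows the local $f$-term in (\ref{eq:Phi}) is dispensable; your near-$g$ estimate coincides in substance with the paper's (\ref{eq:N_Phz_est}), and the $\mathcal{C}_\delta$ versus $\mathcal{C}_{2\delta}$ mismatch is indeed harmless for how the lemma is applied.

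The gap is the far-$g$ piece. The pointwise bound $|h_{\mathrm{far}}|\le Cd\,\|g\|_{L^2(\Ge)}$ on the $Cd$-collar is not a theorem on $C^1$ domains: a harmonic function vanishing on a $C^1$ boundary portion need not be Lipschitz up to that portion ($C^1$ is below the Dini threshold for boundary gradient bounds), so the best available decay is of order $\mathrm{dist}(\cdot,\Ge)^\beta$ with $\beta<1$. Moreover, the scale-invariant form of any such estimate necessarily carries factors of $\delta^{-\beta}$, and converting $\|g\|_{L^2(\Ge)}$ into a pointwise bound at a corkscrew point costs another $\delta^{-(n-1)/2}$; after integrating over $\Get\cap\mathcal{C}_\delta(P)$ this produces a contribution of order $(d/\delta)^{2\beta}\|g\|^2_{L^2(\Ge)}$, which is not dominated by $Cd\,\|g\|^2_{L^2(\Ge)}$ with a constant independent of $\delta$, as the lemma requires. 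You also cannot fall back on the band argument for $h_{\mathrm{far}}$, because the harmonic extension of a merely $L^2(\Ge)$ datum generally has infinite Dirichlet energy. This is precisely why the paper keeps the far parts of $f$ and $g$ together in a single remainder $U_r$ and runs the band argument on it against a global energy bound (an estimate which itself tacitly uses that the boundary datum admits a finite-energy extension, as it does in the application); to close your argument you should treat the far piece the same way, rather than appeal to boundary regularity of harmonic functions.
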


\begin{proof}
Let $f_\delta = f$ in~${\mathcal C}_{\delta}$ and $0$ otherwise and similarly, let~$g_\delta=g$
on~$\Ge \cap {\mathcal C}_{\delta}$ and zero on the remaining part of the boundary.
Let~$U_\delta$ solve the problem~$\Delta U_\delta = f_\delta$ in~$\Oe$ and~$U_\delta = g_\delta$ on~$\Ge$. 
We represent~$U_\delta$ as $U_\delta = U_\delta^1 + U_\delta^2$, 
where~$\Delta U_\delta^1 =f_\delta$ in~$\Dd$, $U_\delta^1 = 0$ on $\partial \Dd$ 
and~$\Delta U_\delta^2 = 0$ in~$\Oe$, $U_\delta = g_\delta - U_\delta^1$ on $\Ge$. Then
\begin{equation}
\label{M4a}
U_\delta^1 \in H^2(\Dd) \;\; \mbox{and} \;\; \| U_\delta^1 \|_{H^2(\Dd)} \leq C\| f_\delta \|_{L^2(\Oe)}.
\end{equation}
Then, according to, e.g.,~\cite{Kenig1994},~$U_\delta^2$ satisfies
\begin{equation}
\label{M4b}
\int_{\Ge} |N(U_\delta^2)|^2 \, dS \leq  C \Big( \int_{\Ge } |g_\delta|^2 \, dS + \int_{\Oe} |f_\delta|^2 \, dx \Big)
\end{equation}
and it follows from~(\ref{M4a}) and~(\ref{M4b}) that
\begin{equation}
\label{eq:N_Phz_est}
\int_{\Get} |U_\delta|^2 \, dS \leq
	C \Big(\int_{\Ge} |g_\delta|^2 \, dS + \int_{\Oe} |f_\delta|^2 \, dx \Big).
\end{equation}

Let~$U_r$ be the solution to~$ \Delta U_r = f - f_\delta$ in~$\Oe$
and~$U_r = g - g_\delta$ on~$\Ge$.
Since~$U_r = 0$ on~$\Ge \cap {\mathcal C}_{\delta}$, we have (compare with the proof of Lemma \ref{L21a})
\begin{equation}
\label{eq:N_Phr_est}
\begin{aligned}
\int_{\Get \cap {\mathcal C}_{\delta}} | U_r |^2 \, dS {} & \leq 
	C \int_{\Ge \cap {\mathcal C}_{\delta}} \int_{h_1}^{h_{12}} 
		|\nabla U_r(x',s)| \, |U_r(x',s)| \, ds \, dS(x')\\
	{} & \leq C d \int_{\Ge \cap {\mathcal C}_{\delta}} \int_{h_1}^{h_{12}} |\nabla U_r(x',s)|^2 \, ds \, dS(x').
\end{aligned}
\end{equation}
This implies that
\[
\int_{\Get \cap {\mathcal C}_{\delta}} | U_r |^2 \, dS \leq C d \int_{\Oe \setminus \Ot} |\nabla U_r|^2 \, dx.
\]
Now applying the estimate
\[
\int_{\Oe} |\nabla U_r |^2 \, dx \leq C \Big( \int_{\Oe} |f|^2 \, dx + \int_{\Ge} |g|^2 \, dS(x') \Big)
\]
and using~(\ref{eq:N_Phz_est}), we arrive at~(\ref{eq:Phi}).
\end{proof}

The estimates in Lemmas~\ref{l:VpVs}, \ref{l:vpVs} and \ref{l:est_T} contain the 
terms~$\nablat \Vp$ and~$\nablat \vp$.
In the next lemma we present estimates for such forms in terms of the small parameter~$d$.

\begin{lemma}
\label{LM28a}
Let~$\vp \in \Xj[m]$ with~$||\varphi||_{H^1(\Oe)}=1$. We have
\begin{equation}
\label{M25a}
\int_{\Get} |\nablat \vp|^2 \, dS = o(1) \;\; \mbox{and} \;\; \int_{\Get} |\nablat \Vp|^2 \, dS = o(1)
\end{equation}
in the~$C^1$-case and
\begin{equation}
\label{M25b}
\int_{\Get} |\nablat \vp|^2 \, dS = O(d^{2\alpha}) \;\; \mbox{and} \;\; \int_{\Get} |\nablat \Vp|^2 \, dS = O(d^{2\alpha})
\end{equation}
in the~$C^{1,\alpha}$-case.
\end{lemma}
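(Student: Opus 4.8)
The plan is to reduce both estimates to the smallness of the gradient of a harmonic function across the thin collar separating $\Ge$ and $\Get$, exploiting that $\nablat\vp$ vanishes on $\Ge$ (because $\vp=0$ there) and $\nablat\Vp$ vanishes on $\Gt$ (because $\Vp\in\Ht$). I would work in the local coordinates $(y',y_n)$ of the cylinders of Section~\ref{SK1}. On the part of $\Get$ where $h_{12}=h_1$ one has $\Get=\Ge$, so $\nablat\vp\equiv0$ and nothing is contributed; only the collar $\{h_2>h_1\}$, of vertical width $O(d)$ by the first relation in~(\ref{M23b}) (resp.~(\ref{M23c})), matters. Writing $P_h=I-\nu_h\otimes\nu_h$ for the projector onto the tangent plane of the graph $y_n=h(y')$ and using $P_1\nabla\vp(\cdot,h_1)=0$, I decompose on $\Get$
\[
\nablat\vp=P_{12}\bigl(\nabla\vp(\cdot,h_{12})-\nabla\vp(\cdot,h_1)\bigr)+(P_{12}-P_1)\nabla\vp(\cdot,h_1),
\]
so the estimate splits into a \emph{tilt} term and a \emph{height-difference} term.

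The tilt term is immediate: $|P_{12}-P_1|\le C|\nabla h_{12}-\nabla h_1|\le C|\nabla(h_2-h_1)|$, which is $o(1)$ by~(\ref{M23b}) and $O(d^\alpha)$ by~(\ref{M23c}), while $\int\sup_{s}|\nabla\vp(y',s)|^2\,dy'\le C$ by~(\ref{M21a}) controls $\nabla\vp(\cdot,h_1)$ in $L^2$ of the base; hence this term contributes $o(1)$, resp.~$O(d^{2\alpha})$, to $\int_{\Get}|\nablat\vp|^2\,dS$. For the height-difference term I would use the splitting $\vp=U+V$ from the proof of Lemma~\ref{L21a}. Since $U\in H^3(\Dd)$, each $\partial_i\partial_jU\in H^1(\Dd)$, so the anisotropic embedding~(\ref{M31b}) gives $\int\sup_{s}|\nabla^2U(y',s)|^2\,dy'\le C$; consequently $|\nabla U(y',h_{12})-\nabla U(y',h_1)|\le|h_{12}-h_1|\sup_{s}|\nabla^2U|\le Cd\,\sup_{s}|\nabla^2U|$, and the $U$-part of the height-difference contributes $O(d^2)$, admissible in both regimes.

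The remaining harmonic part $V$ (harmonic in $\Oe$, with $\N(\nabla V)\in L^2(\Ge)$ by~(\ref{M20c})) is the crux: one must estimate $\int_{\Get}|\nabla V(\cdot,h_{12})-\nabla V(\cdot,h_1)|^2\,dS$. Note that Lemma~\ref{l:Phi} is of no help here, since it controls boundary values on $\Get$ but not the difference across the collar; the required smallness is genuinely a gradient-continuity statement at the boundary. In the $C^1$-case this is clean: each component of $\nabla V$ is harmonic with $\N(\nabla V)\in L^2(\Ge)$, hence admits non-tangential limits a.e., and since the vertical displacement $h_{12}-h_1$ is at most $Cd\to0$ uniformly along non-tangential directions, dominated convergence (with majorant $(2\N(\nabla V))^2\in L^1$) yields $\int_{\Get}|\nabla V(\cdot,h_{12})-\nabla V(\cdot,h_1)|^2\,dS\to0$, i.e.~the $o(1)$ bound (uniform over the unit sphere of the finite-dimensional $\Xj[m]$). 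In the $C^{1,\alpha}$-case the same difference must be shown to be $O(d^{2\alpha})$, which is the quantitative version of this boundary continuity. I expect this to be the main difficulty, and would supply it from the $C^{1,\alpha}$-regularity theory for harmonic functions in $C^{1,\alpha}$-domains (the Dahlberg--Fabes--Jodeit--Kenig--Rivi\'{e}re circle of results already used in the paper): the interior bound $|\nabla^2V(x)|\le C\,\dist(x,\Ge)^{\alpha-1}$ together with the Cauchy--Schwarz inequality across the collar gives
\[
\int_{\Get}\bigl|\nabla V(\cdot,h_{12})-\nabla V(\cdot,h_1)\bigr|^2\,dS\le Cd\int_{\{0<\dist(\cdot,\Ge)<Cd\}}|\nabla^2V|^2\,dx\le Cd\cdot d^{2\alpha-1}=Cd^{2\alpha}.
\]

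Finally, the estimate for $\nablat\Vp$ follows by the symmetric argument. From $\vp=U+V$ and $\Rvp=U+V_R$, with $V_R$ harmonic in $\Ot$ and $V_R=-U$ on $\Gt$ as in Lemma~\ref{L21b}, the function $\Vp=\vp-\Rvp$ vanishes on $\Gt$, so $\nablat\Vp=0$ there. Repeating the decomposition with the roles of $\Ge$ and $\Gt$ (and of $h_1$ and $h_2$) interchanged---the relevant collar is now $\{h_1>h_2\}$, where $\Vp=-U-V_R$, and the tilt coefficient is $|\nabla(h_{12}-h_2)|\le|\nabla(h_1-h_2)|$, again $o(1)$ resp.~$O(d^\alpha)$---reduces $\nablat\Vp$ to the same smooth ($U$, giving $O(d^2)$) and harmonic ($V_R$, with $\N(\nabla V_R)\in L^2(\Gt)$ by~(\ref{M20k})) height-difference estimates. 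This yields~(\ref{M25a}) and~(\ref{M25b}).
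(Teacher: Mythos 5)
Your route is genuinely different from the paper's. The paper never estimates the change of $\nabla\vp$ across the collar: it observes instead that $u_j=\partial_{y_j}\vp$ (with $y_j$ tangent to $\Ge$ at the center $P$ of a cylinder of radius $\delta=O(d)$) solves a Dirichlet problem in $\Oe$ whose boundary datum on $\Ge$ equals $-\partial_{y_n}\vp\,\partial_{y_j}h_1$, hence is $o(1)$ (resp.\ $O(d^\alpha)$) times an $L^2(\Ge)$-bounded function because $\nabla h_1(0)=0$; Lemma~\ref{l:Phi} then transports this $L^2(\Ge)$-smallness to $L^2(\Get)$, using only the Lipschitz-domain nontangential maximal function estimates. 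This sidesteps exactly the ``gradient continuity across the collar'' that you correctly identify as the crux of your decomposition. Your tilt term, your $U$-part, and your dominated-convergence argument for the $V$-part in the $C^1$-case are sound (modulo the finite-dimensionality remark for uniformity over the unit sphere of $\Xj[m]$), so~(\ref{M25a}) would go through by your method.

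The $C^{1,\alpha}$-case, however, has a genuine gap, and it sits precisely at the step you flagged as the main difficulty. First, the pointwise bound $|\nabla^2 V(x)|\le C\,\dist(x,\Ge)^{\alpha-1}$ is not available: $V$ is harmonic with boundary datum $-U|_{\Ge}$, where $U\in H^3(\Dd)$ need not belong to $C^{1,\alpha}$ in general dimension ($H^3$ fails to embed even into $C^1$ for $n\ge 4$), so Schauder theory does not apply and all one has is $N(\nabla V)\in L^2(\Ge)$ as in~(\ref{M20c}). Second, even granting that pointwise bound, your chain of inequalities fails for $\alpha\le 1/2$: the collar integral satisfies
\[
\int_{\{0<\dist(\cdot,\Ge)<Cd\}}\dist(\cdot,\Ge)^{2\alpha-2}\,dx \approx \int_{B'_{\delta}}\int_0^{Cd}t^{2\alpha-2}\,dt\,dy',
\]
and $\int_0^{Cd}t^{2\alpha-2}\,dt$ diverges whenever $2\alpha-2\le -1$, i.e.\ for every $\alpha\le 1/2$; the asserted value $O(d^{2\alpha-1})$ is correct only for $\alpha>1/2$. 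Hence the $O(d^{2\alpha})$ bound for the $V$-part of your height-difference term is unproved on half the admissible range of $\alpha$, and the same objection applies verbatim to the $V_R$-part in your treatment of $\nablat\Vp$. To repair this one must replace the pointwise second-derivative estimate by an $L^2$-type transfer of boundary smallness from $\Ge$ to $\Get$ --- which is exactly what the paper's combination of the identity $g=-\partial_{y_n}\vp\,\partial_{y_j}h_1$ with Lemma~\ref{l:Phi} accomplishes.
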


\begin{proof}
Consider a cylinder~${\mathcal C}_{\delta}(P)$ for a certain~$\delta=O(d)$ and~$P \in \Ge$. 
Then the directions $y_j$,~$j=1,\ldots,n-1$, are tangent to~$\Ge$ at~$P$. 
We choose a direction~$y_j$ for a certain~$j=1,\ldots,n-1$, and put~$u_j=\partial_{y_j} \vp$.
Then
\[
-\Delta u_j = \lm u_j \;\; \mbox{in $\Oe$ and} \;\; u_j = g := \partial_{y_j}\vp \;\; \mbox{on $\Ge$.}
\]
The function~$g$ belongs to~$L^2(\partial\Oe)$ and since~$\partial_{y_j} \vp(y',h_1(y')) = 0$, we have
\[
g(y',h_1(y')) = -\partial_{y_n}\vp(y',h_1(y')) \partial_{y_j} h_1(y').
\]
Using that $\partial_{y_n}\vp(y',h_1(y')) \in L^2(\Ge)$, we obtain that
\[ 
\int_{B'_\delta} g^2 \, dy' = o(1) \int_{B'_\delta} |\nabla\vp|^2 \, dS.
\]
Noting that
\[
\int_{\Oe \cap {\mathcal C}_{\delta}(P)} \vp^2 \, dx \leq 
	Cd^2 \int_{\Oe \cap {\mathcal C}_{\delta}} |\nabla\vp|^2 \, dx
\]
and applying Lemma \ref{l:Phi}, we get
\[
\int_{\Get \cap {\mathcal C}_{\delta}(P)} u_j^2 \, dS = o(1) \int_{B'_\delta} |\nabla\vp|^2 \, dS
	+ O\Big( d^2 \int_{\Oe \cap {\mathcal C}_{\delta}} |\nabla\vp|^2 \, dx \Big).
\]
This implies the first inequality in (\ref{M25a}).

In the~$C^{1,\alpha}$-case, we have
\[
\int_{B'_\delta} g^2 \, dy' \leq C \delta^{2\alpha} \int_{B'_\delta} |\nabla\vp|^2 \, dS.
\]
Repeating the above proof for the~$C^1$-case, we arrive at the first inequality of~(\ref{M25b}).

Since~$\Vp = \vp - \Rvp$, where~$-\Delta \Rvp = \lm \vp$ in~$\Ot$ and~$\Rvp = 0$ on~$\Gt$, and
the required inequality for~$\vp$ is proved already, it is sufficient to prove the 
inequalities for~$\Rvp$ only. However, since the boundary value problem for~$\Rvp$ is similar
to that for~$\vp$, the proof can be carried out in the same way as for~$\vp$.
\end{proof}

\section{Proof of Theorems \protect\ref{i:t:holder_boundary} and \protect\ref{i:t:C1_boundary}}
By~(\ref{eq:est_vpVs}) and~(\ref{eq:est_Vpvs}) combined with Lemma~\ref{LM28a}, we obtain that
\[
|\ip{\vp}{\Vs}|+|\ip{\vs}{\Vp}| = \left\{ \begin{array}{ll}
	o(d) 		& \mbox{for the~$C^1$-case}\\
	O(d^{1+\alpha}) & \mbox{for the~$C^{1,\alpha}$-case}
\end{array} \right. \;\; \mbox{as $d\to 0$.}
\]
Moreover, due to~(\ref{eq:est_T}) and Lemma~\ref{LM28a},
\[
|\ip{\S\vp}{\S\vs} - \ip{\vp}{\vs}| = \left\{ \begin{array}{ll}
	o(d) 		& \mbox{for the~$C^1$-case}\\
	O(d^{1+\alpha}) & \mbox{for the~$C^{1,\alpha}$-case}
\end{array} \right. \;\; \mbox{as $d\to 0$.}
\]
Having in mind these relations and~(\ref{M12c}), we can write formula~(\ref{M28s}) as
\begin{equation}
\label{M28t}
\um[k]^{-1} = \lm[m]^{-1} + \tk + \left\{ \begin{array}{ll}
	o(d) 		& \mbox{for the~$C^1$-case}\\
	O(d^{1+\alpha}) & \mbox{for the~$C^{1,\alpha}$-case}
\end{array}\right., \; \; k = 1,2,\ldots,\Jm, 
\end{equation}
where~$\tau = \tk$ is an eigenvalue of the problem
\begin{equation}
\label{M28u}
\lm[m]^{-1} \Big(\int_{\Ot \setminus \Oe} \nabla \Rvp \cdot \nabla \Rvs \, dx
		- \int_{\Oe \setminus \Ot} \nabla \vp \cdot \nabla \vs \, dx \Big)
= \tau \ip{\vp}{\vs},
\end{equation}
where~$\vp \in \Xj[m]$ and~$\tau_1,\ldots,\tau_{J_m}$ run through all eigenvalues of~(\ref{M28u})
counted with multiplicity.

To obtain results for integrals over the domains~$\Oe \setminus \Ot$ and~$\Ot \setminus \Oe$ 
in the case of~$C^1$ or~$C^{1,\alpha}$ boundaries
phrased in terms of boundary integrals, the following two Lemmas will be helpful.

\begin{lemma}
\label{l:lip_boundaryint}
Let~$\Oe$ and~$\Ot$ satisfy the~$C^1$-assumption from Section~\ref{SK1}.
Suppose that~$g \in L^2(\Ge)$, $f \in L^2(\Oe)$ and that~$U$ solves~$\Delta U = f$ in~$\Oe$ and~$U = g$ on~$\Ge$.
Then
\begin{equation}
\label{eq:lip_boundaryint}
\int_{\Oe \setminus \Ot} |U|^2 \, dx = -\int_{\Ge} \sigma_{-} |g|^2 \, dS + o(d),
\end{equation}
as~$d \rightarrow 0$. Here,~$\sigma_{-} = \max (0,-\sigma)$, where~$\sigma$ is the function defined by~{\rm (\ref{KK1a})}. 
If, in addition,~$\Oe$ and~$\Ot$ satisfy the~$C^{1,\alpha}$-assumption from Section~\ref{SK1},~$g \in C^{0,\alpha}(\Ge)$
and~$f\in L^\infty(\Oe)$, then~{\rm(}\ref{eq:lip_boundaryint}{\rm)} is valid with~$o(d)$ replaced by~$O(d^{1+\alpha})$. 
\end{lemma}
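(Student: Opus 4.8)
The plan is to localize the computation to the finite cover by cylinders $\mathcal{C}_{\delta}(P)$, $P\in\Ge$, and in each cylinder to pass to the graph coordinates $(y',y_n)$ in which $\Ge$ is $y_n=h_1(y')$ and $\Gt$ is $y_n=h_2(y')$. In these coordinates $\Oe\setminus\Ot$ is the thin slab $\{(y',y_n):h_1(y')<y_n<h_{12}(y')\}$, which is non-empty only where $h_2>h_1$, i.e.\ exactly where $\Gt$ lies inside $\Oe$ and hence (by the sign convention following~(\ref{KK1a})) where $\sigma<0$ and $\sigma_{-}=-\sigma>0$. By Fubini I would write $\int_{\Oe\setminus\Ot}|U|^2\,dx$ as $\int_{B'_\delta}\bigl(\int_{h_1}^{h_{12}}|U(y',y_n)|^2\,dy_n\bigr)\,dy'$ and isolate the leading contribution $(h_{12}-h_1)|g(y')|^2$, where $g(y')=U(y',h_1(y'))$ is the boundary trace. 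The vertical height $h_{12}-h_1=\max(h_2-h_1,0)$ of the slab equals, to leading order, the normal separation $\sigma_{-}$, while $dy'$ and $dS$ differ by the Jacobian $\sqrt{1+|\nabla h_1|^2}$; since $\nabla h_1(P)=0$ by~(\ref{M23a}) and $\nabla h_1$ is small on $\mathcal{C}_\delta(P)$ with $\delta=O(d)$, these geometric discrepancies, together with the $O(d)\cdot M$ horizontal drift of the normal line, are absorbed into the remainder. This reduces matters to controlling the replacement of $U$ by its trace $g$ across the slab, which produces the weighted boundary integral in~(\ref{eq:lip_boundaryint}).

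The main obstacle is the low regularity in the $C^1$-case, where $g$ is merely in $L^2(\Ge)$: then $\nabla U$ need not be square-integrable up to $\Ge$, so one cannot bound $|U(y',y_n)-g(y')|$ by integrating $\partial_{y_n}U$ across the slab as was done in Lemma~\ref{L21a}. Instead I would invoke the non-tangential maximal function theory for $C^1$-domains (Kenig~\cite{Kenig1994}, as already used for~(\ref{M4b})): one has $N(U)\in L^2(\Ge)$ with $\|N(U)\|_{L^2(\Ge)}\le C(\|g\|_{L^2(\Ge)}+\|f\|_{L^2(\Oe)})$, together with the non-tangential a.e.\ convergence $U(y',y_n)\to g(y')$ as $y_n\to h_1(y')^{+}$. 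Since the slab points lie in the approach cones over $(y',h_1(y'))$, the means $(h_{12}-h_1)^{-1}\int_{h_1}^{h_{12}}|U|^2\,dy_n$ are dominated by $N(U)^2\in L^1(\Ge)$ and converge a.e.\ to $|g|^2$ as $d\to0$. Multiplying by the height $h_{12}-h_1=O(d)$, dividing by $d$, and applying dominated convergence then shows that $\int_{\Oe\setminus\Ot}|U|^2\,dx$ agrees with the boundary integral in~(\ref{eq:lip_boundaryint}) up to $o(d)$.

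In the $C^{1,\alpha}$-case the same scheme is made quantitative. Here $g\in C^{0,\alpha}(\Ge)$ and $f\in L^\infty(\Oe)$, so global Hölder regularity up to the boundary in a $C^{1,\alpha}$-domain yields $|U(y',y_n)-g(y')|\le C(y_n-h_1)^{\alpha}\le Cd^{\alpha}$, and since $U$ and $g$ are bounded this gives $\bigl||U|^2-|g|^2\bigr|\le Cd^{\alpha}$ on the slab. Integrating over the slab, whose height is $O(d)$ and whose base $B'_\delta$ has bounded measure, produces an error $O(d^{1+\alpha})$; the Jacobian discrepancy is now $O(|\nabla h_1|^2)=O(d^{2\alpha})$ on $\mathcal{C}_\delta(P)$, which again contributes at most $O(d^{1+\alpha})$ after multiplication by the slab height. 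Summing the local estimates over the finite cover then yields~(\ref{eq:lip_boundaryint}) with remainder $o(d)$ in the $C^1$-case and $O(d^{1+\alpha})$ in the $C^{1,\alpha}$-case.
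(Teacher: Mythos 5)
Your argument is correct and follows the paper's overall skeleton --- pass to graph coordinates, write $\int_{\Oe \setminus \Ot}|U|^2\,dx$ as a Fubini integral over the slab $h_1<y_n<h_{12}$, replace $U$ by its boundary trace, and identify the slab height $(h_2-h_1)_+\,dy'$ with $\sigma_{-}\,dS$ up to $o(d)$ (resp.\ $O(d^{1+\alpha})$) --- but you handle the one genuinely delicate step, the trace replacement when $g$ is merely in $L^2(\Ge)$, by a different mechanism. The paper first proves the stability bound $\int_{\Oe\setminus\Ot}|U|^2\,dx\le Cd(\|g\|^2_{L^2(\Ge)}+\|f\|^2_{L^2(\Oe)})$, approximates $g$ by $g_j\in H^1(\Ge)$, and for each fixed $j$ uses the solvability of the regularity problem ($\widehat N(\partial_{y_n}U_j)\in L^2$) to integrate $\partial_s U_j\,U_j$ across the slab, obtaining a per-$j$ error $C_jd^2=o(d)$ which is then combined with the approximation error $Cd\epsilon_j^2$ from (\ref{feb23b}). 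You instead stay entirely within the $L^2$-Dirichlet theory: the slab averages are dominated by $N(U)^2\in L^1(\Ge)$ and converge a.e.\ to $|g|^2$ by nontangential convergence, and dominated convergence gives the $o(d)$ directly. Both routes rest on the same Fabes--Jodeit--Rivi\`ere/Kenig machinery for $C^1$-domains; yours avoids the approximating sequence and the regularity problem altogether, at the price of being purely qualitative (which is all that $o(d)$ requires), whereas the paper's route yields the quantitative intermediate estimate (\ref{M31c}). The $C^{1,\alpha}$ case is essentially identical in the two proofs. Three small remarks. First, to get $N(U)\in L^2(\Ge)$ and a.e.\ nontangential convergence you should, as the paper does, split off the inhomogeneous part $u$ with $\Delta u=f$ in $\Dd$, $u\in H^2(\Dd)$, and apply the harmonic-analysis results only to the harmonic remainder; also the vertical segments over $y'$ must lie in the nontangential cones, which holds here because the local Lipschitz constant of $h_1$ is small (in general one widens the cone aperture, which does not affect the $L^2$ bounds). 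Second, absorbing the horizontal drift of the normal line requires $|\nabla(h_2-h_1)|=o(1)$, i.e.\ assumption (\ref{eq:grad_hk_c1}), not merely the smallness of $\nabla h_1$; you should invoke it explicitly. Third, your computation produces $+\int_{\Ge}\sigma_{-}|g|^2\,dS$, which is the correct sign (the left-hand side is nonnegative, and this is what (\ref{M28u}) needs in order to produce $\sigma=\sigma_{+}-\sigma_{-}$ in (\ref{eq:T2_tk_intro})); the minus sign in (\ref{eq:lip_boundaryint}) appears to be a typo, so do not contort your argument to match it.
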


\begin{proof} 
We represent~$U$ as~$U = u + v$, where
\[
\Delta u = f\;\; \mbox{in $\Dd$ and} \;\; u = 0 \;\; \mbox{on $\partial \Dd$}
\]
and
\[
\Delta v = 0\;\; \mbox{in $\Oe$ and} \;\; v = g - u \;\; \mbox{on $\Ge$.}
\]
Then~$u \in H^2(\Dd)$ with~$\|u\|_{H^2(\Dd)} \leq C\|f \|_{L^2(\Oe)}$ and
\begin{equation}
\label{M5a}
\int_{\Ge} \N(v)^2 \, dS \leq C( \|g\|^2_{L^2(\Ge)} + \|f\|^2_{L^1(\Oe)}).
\end{equation}
These estimates lead to
\begin{equation*}
\begin{aligned}
\int_{B'_\delta} \int_{h_1(y')}^{h_{12}(y')} |U(y',y_n)|^2 \, dy 
	& {} \leq C d \int_{B'_\delta} \sup_{h_1 < y_n < h_{12}} U^2(y',y_n) \, dy'\\
	& {} \leq C d \, ( \|g\|^2_{L^2(\Ge)} + \|f\|^2_{L^2(\Oe)} ),
\end{aligned}
\end{equation*}
which implies that
\begin{equation}
\label{M30a}
\int_{\Oe \setminus \Ot} |U|^2 \, dx \leq C d \, ( \|g\|^2_{L^2(\Ge)} + \|f\|^2_{L^2(\Oe)} ).
\end{equation}

Let us choose an approximating sequence~$g_j \in H^1(\Ge)$, $j=1,2\ldots$, of the function~$g$, 
i.e., we assume that $\epsilon_j := \| g - g_j \|_{L^2(\Ge)} \to 0$ when~$j \to \infty$. 
If we denote by~$U_j$ the solution to
\[
\Delta U_j = f \;\; \mbox{in $\Oe$ and} \;\; U_j = g_j \;\; \mbox{on $\Ge$,}
\]
then by~(\ref{M30a}),
\begin{equation}
\label{feb23b}
\int_{\Oe \setminus \Ot} |U-U_j|^2 \, dx \leq C d \, \epsilon_j^2.
\end{equation}
We represent~$U_j$ as~$U_j = u + v_j$, where~$\Delta v_j = 0$ in $\Oe$ and $v_j = g_j - u$ on~$\Ge$. Then
\[
\int_{B'_\delta} \sup_{h_1 < y_n < h_{12}} |\partial_{y_n} U_j(y',y_n)|^2 \, dy' 
	\leq C(\|g_j\|^2_{H^1(\Ge)} + \|f\|^2_{L^2(\Oe)})
\]
since~$g_j \in H^1(\Ge)$ and~$u \in H^2(\Oe)$ (see (\ref{M31b})).

Observing that
\[
\begin{aligned}
\int_{B'_{\delta}} \int_{h_1}^{h_{12}} U_j^2(y',y_n) \, dy_n \, dy' 
	= {} & \int_{B'_{\delta}} \int_{h_1}^{h_{12}} U_j^2(y',h_1(y')) \, dy_n \, dy'\\
	& + 2 \int_{B'_{\delta}} \int_{h_1}^{h_{12}} \int_{h_1}^{y_n} \partial_s U_j(y',s) \,U_j(y',s) \, ds \, dy_n \, dy',
\end{aligned}
\]
we thus obtain that
\begin{equation}
\label{feb23a}
\Big| \int_{B'_{\delta}} \int_{h_1(y')}^{h_{12}(y') } (U_j^2(y',y_n) - g_j^2(y')) \, dy_n \, dy' \Big|
	\leq Cd^2 \int_{B'_{\delta}} \widehat{N}(U_j) \widehat{N}(\partial_{y_n} U_j) \, dy',
\end{equation}
where
\[
\widehat{N}(w)(y') = \sup_{|y_n| < \delta} |w(y',y_n)|.
\]
From~(\ref{feb23a}) it follows that
\begin{equation}
\label{M31c}
\Big| \int_{B'_{\delta}} \int_{h_1(y')}^{h_{12}(y')} U_j^2(y',y_n) \, dy_n dy'  - 
	\int_{B'_{\delta}} \bigl( h_2(y') - h_1(y') \bigr) g_j^2(y') \, dy' \Big| \leq C_j d^2
\end{equation}
with a certain constant depending on~$j$ but independent of~$d$.

Now, since
\[
\nu(y') = \frac{(\nabla h_1(y'),-1)}{\sqrt{1+|\nabla h_1(y')|^2}},
\]
we have (due to the~$C^1$-assumption)
\[
\sigma(y') = \frac{h_1(y') - h_2(y')}{\sqrt{1+|\nabla h_1(y')|^2}} + o(d) \;\; \mbox{as $d\to 0$,}
\]
which together with~(\ref{M31c}) implies that
\[
\Big| \int_{B'_\delta} \int_{h_1(y')}^{h_{12}(y')} U_j^2(y',y_n) \, dy_n \, dy' - 
		\int_{B'_\delta} \sigma(x') |g_j(x')|^2 \, dS(x') \Big| = o(d),
\]
where~$dS(x') = \sqrt{1+|\nabla h_1(y')|^2} \, dy'$. 
This together with~(\ref{feb23b}) implies that~(\ref{eq:lip_boundaryint}) holds.

We now consider the~$C^{1,\alpha}$-case. Indeed, the solution (weak solution) then 
belongs to~$C^{1,\alpha}(\Oe)$ and
\[
\| U \|_{C^{1,\alpha}(\Oe)} \leq C ( \| g\|_{C^{1,\alpha}(\Ge)} + \|f\|_{L^\infty(\Oe)} ) \mbox{;}
\]
see for instance Agmon et al.~\cite{Agmon1959}.
In this case, we can write
\begin{equation*}
\Big| \int_{B'_{\delta}} \int_{h_1(y')}^{h_{12}(y')} U^2(y',y_n) \, dy_n \, dy' - 
		\int_{B'_{\delta}} \int_{h_1(y')}^{h_{12}(y')} U^2(y',h_1(y')) \, dy_n \, dy' \Big| \leq C d^{1+\alpha}
\end{equation*}
because of the H\"older continuity of~$U$. Furthermore, it is clear that
\[
\sigma (y') = \frac{h_1(y') - h_2(y')}{\sqrt{1+|\nabla h_1(y')|^2}} + O(d^{1+\alpha}) \;\; \mbox{as $d\to 0$,}
\]
by the~$C^{1,\alpha}$-assumption. This proves that~(\ref{eq:lip_boundaryint}) holds with the remainder~$o(d)$ replaced
by~$O(d^{1+\alpha})$.
\end{proof}

\noindent Analogously with Lemma~\ref{l:lip_boundaryint}, we can prove the following result.

\begin{lemma}
\label{l:lip_boundaryint2}
Let~$\Oe$ and~$\Ot$ be the same domains as in the previous lemma.
Suppose that~$g \in L^2(\partial \Ot)$, $f\in L^2(\Ot)$ and that~$U \in H^1(\Ot)$ solves~$\Delta U = f$ in~$\Ot$ and~$U = g$ on~$\partial \Ot$.
Then
\begin{equation}
\label{M28a}
\int_{\Ot \setminus \Oe} |U|^2 \, dx = \int_{\Ge \cap \Ot} \sigma_{+} \, |U|^2 \, dS + o(d)\;\;\mbox{as~$d \rightarrow 0$}
\end{equation}
in the~$C^1$-case, where~$\sigma_{+} = \max(\sigma, 0)$. In the~$C^{1,\alpha}$-case, we assume in addition
that~$g\in C^{0,\alpha}(\Gt)$ and~$f\in L^\infty(\Ot)$, and the remainder~$o(d)$ can be replaced by~$O(d^{1+\alpha})$
in~{\rm(}\ref{M28a}{\rm)}.
\end{lemma}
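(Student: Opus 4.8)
The plan is to mirror the proof of Lemma~\ref{l:lip_boundaryint}, interchanging the roles of $\Oe$ and $\Ot$ (equivalently, of $h_1$ and $h_2$) and replacing $\sigma_-$ by $\sigma_+$. The geometric reason for this replacement is that in each cylinder the region $\Ot \setminus \Oe$ is described in local coordinates by $\{h_2(y') < y_n < h_1(y')\}$, which is nonempty precisely where $h_1 > h_2$, i.e.\ where $\sigma > 0$; hence only the positive part $\sigma_+ = \max(\sigma,0)$ contributes. The crucial difference from the previous lemma is that the surface appearing in the leading term, $\Ge \cap \Ot$, is now \emph{interior} to $\Ot$, so the expansion of the solution is taken about its own trace $U|_{\Ge \cap \Ot}$ rather than about prescribed boundary data; this is what produces the integrand $|U|^2$ in place of $|g|^2$.

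First I would split $U = u + v$, where $\Delta u = f$ in $\Dd$ with $u = 0$ on $\partial \Dd$, so that $u \in H^2(\Dd)$ with $\|u\|_{H^2(\Dd)} \le C\|f\|_{L^2(\Ot)}$, and where $v$ is harmonic in $\Ot$ with $v = g - u$ on $\partial \Ot$. The $C^1$-theory for the Dirichlet problem (Kenig~\cite{Kenig1994}) then gives the non-tangential maximal-function bound $\int_{\partial \Ot} \N(v)^2\,dS \le C(\|g\|_{L^2(\partial\Ot)}^2 + \|f\|_{L^1(\Ot)}^2)$. Since $\Ot \setminus \Oe$ is a layer of thickness $O(d)$ lying within the non-tangential cones, combining this with the sup-integrability estimate~(\ref{M31b}) yields the crude bound $\int_{\Ot \setminus \Oe} |U|^2\,dx \le Cd(\|g\|_{L^2(\partial\Ot)}^2 + \|f\|_{L^2(\Ot)}^2)$, exactly as in~(\ref{M30a}).

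Next I would approximate $g$ by functions $g_j \in H^1(\partial \Ot)$ with $\epsilon_j = \|g - g_j\|_{L^2(\partial\Ot)} \to 0$, let $U_j$ be the corresponding solutions, and use the crude bound to control $\int_{\Ot \setminus \Oe} |U - U_j|^2\,dx \le Cd\,\epsilon_j^2$, the analogue of~(\ref{feb23b}). For each fixed $j$ the extra smoothness ($g_j \in H^1$, $u \in H^2$) gives a sup-bound on $\partial_{y_n} U_j$, so writing $U_j^2(y',y_n) = U_j^2(y',h_1(y')) + 2\int_{h_1}^{y_n} \partial_s U_j\, U_j\,ds$ and integrating over the thin layer reduces the volume integral to $\int_{B'_\delta}(h_1 - h_2)\,U_j^2(y',h_1(y'))\,dy'$ up to an error $O_j(d^2)$, in parallel with~(\ref{feb23a})--(\ref{M31c}). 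Invoking the $C^1$-assumption together with $\nu(y') = (\nabla h_1(y'),-1)/\sqrt{1+|\nabla h_1(y')|^2}$ gives $\sigma(y') = (h_1 - h_2)/\sqrt{1+|\nabla h_1|^2} + o(d)$ and $dS = \sqrt{1+|\nabla h_1|^2}\,dy'$, so the leading term becomes $\int_{\Ge \cap \Ot} \sigma_+ |U_j|^2\,dS$; the same limiting argument in $j$ as in Lemma~\ref{l:lip_boundaryint} then yields~(\ref{M28a}) in the $C^1$-case.

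Finally, for the $C^{1,\alpha}$-case the approximation step is unnecessary: by the Schauder-type estimate for the solution (Agmon et al.~\cite{Agmon1959}) the function $U$ is H\"older continuous up to the boundary, so $|U^2(y',y_n) - U^2(y',h_1(y'))| \le Cd^\alpha$ on the layer, which after integration over the region of thickness $O(d)$ contributes $O(d^{1+\alpha})$; combined with the sharper expansion $\sigma(y') = (h_1 - h_2)/\sqrt{1+|\nabla h_1|^2} + O(d^{1+\alpha})$ coming from the $C^{1,\alpha}$-assumption, this gives~(\ref{M28a}) with remainder $O(d^{1+\alpha})$. I expect the main obstacle to be the bookkeeping around the interior surface $\Ge \cap \Ot$: one must verify that the trace of the \emph{solution} $U$ on this surface is controlled in $L^2$ (which is supplied by the non-tangential maximal-function estimate, since $\Ge \cap \Ot$ lies within the approach cones) and that the sign convention for $\sigma$ correctly isolates $\sigma_+$ on the set where the layer $\{h_2 < y_n < h_1\}$ is nonempty.
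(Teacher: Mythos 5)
Your proposal is correct and is precisely the argument the paper intends: the paper gives no separate proof of this lemma, stating only that it follows ``analogously'' to Lemma~\ref{l:lip_boundaryint}, and your write-up carries out exactly that analogy --- the decomposition $U=u+v$, the crude $O(d)$ bound via non-tangential maximal functions, the $H^1$-approximation $g_j$ in the $C^1$-case, the fundamental-theorem-of-calculus expansion across the thin layer $\{h_2<y_n<h_1\}$, and the H\"older/Schauder shortcut in the $C^{1,\alpha}$-case. You also correctly identify the two genuine points of divergence (the expansion is taken about the trace of $U$ on the interior surface $\Ge\cap\Ot$, controlled by the same non-tangential estimates, and the sign bookkeeping that replaces $\sigma_-$ by $\sigma_+$ because $\Ot\setminus\Oe$ lives where $h_1>h_2$, i.e.\ $\sigma>0$), so nothing further is needed.
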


\section{The end of the proof of Theorems \protect\ref{i:t:holder_boundary} and \protect\ref{i:t:C1_boundary}}

The function~$U = \partial_{x_j}\vp$ satisfies
\[
\Delta U = f := -\lm \partial_{x_j} \vp \;\; \mbox{in $\Oe$ and} 
	\;\; U = g := \partial_{x_j}\vp \;\; \mbox{on $\Ge$.}
\]
Clearly, $f \in L^2(\Oe)$ and~$g \in L^2(\Ge)$ in the~$C^1$-case, 
and in the~$C^{1,\alpha}$-case, we have~$f \in C^{0,\alpha}(\Oe)$ and~$g \in C^{0,\alpha}(\Ge)$.
Applying Lemma~\ref{l:lip_boundaryint}, we obtain
\begin{equation}
\label{M31k}
\int_{\Oe \setminus \Ot} \partial_{x_j} \vp \, \partial_{x_j} \vs \, dx = 
	-\int_{\Ge} \sigma_{-} \, \partial_{x_j} \vp \, \partial_{x_j} \vs \, dS + o(d)
\end{equation}
in the~$C^1$-case and with the remainder~$o(d)$ replaced by~$O(d^{1+\alpha})$ in the~$C^{1,\alpha}$-case. 
We note that in order to apply Lemma~\ref{l:lip_boundaryint},
it is useful to use the relation
\begin{equation}
\label{M31n}
\partial_{x_j} \vp \, \partial_{x_j} \vs = \frac{1}{2} \bigl( \partial_{x_j}\vp + \partial_{x_j}\vs\bigr)^2 - 
	\frac{1}{2} \partial_{x_j}\vp^2 - \frac{1}{2}\partial_{x_j}\vs^2
\end{equation}
since formula~(\ref{eq:lip_boundaryint}) only contains squares of functions. 
Equality~(\ref{M31k}) implies that
\begin{equation}
\label{M31l}
\int_{\Oe \setminus \Ot} \nabla\vp \cdot \nabla\vs \, dx = -\int_{\Ge} \sigma_{-} \nabla\vp \cdot \nabla\vs \, dS + o(d)
\end{equation}
for the~$C^1$-case and with the remainder replaced by~$O(d^{1+\alpha})$ in the~$C^{1,\alpha}$-case.

Furthermore, the function~$U = \partial_{x_j} \Rvp$ satisfies
\[
\Delta U = f := -\lm \partial_{x_j} \vp \;\; \mbox{in~$\Ot$ and} \;\; U = g := \partial_{x_j} \Rvp \;\; \mbox{on~$\Gt$.}
\]
Applying Lemma \ref{l:lip_boundaryint2}, we get
\begin{equation}
\label{M31m}
\int_{\Oe \setminus \Ot} \partial_{x_j}\Rvp \, \partial_{x_j} \Rvs \, dx = 
	\int_{\Ge} \sigma_{+} \, \partial_{x_j} \Rvp \, \partial_{x_j} \Rvs \, dS + o(d)
\end{equation}
in the~$C^1$-case, and in the~$C^{1,\alpha}$-case, the remainder~$o(d)$ is replaced by~$O(d^{1+\alpha})$.
Using that~$\partial_{x_j} \Rvp = \partial_{x_j} \vp - \partial_{x_j}\Vp$ and that
\[
\int_{\Get} |\nabla \Vp|^2 \, dS = o(1) \;\; \mbox{as $d\to 0$}
\]
in the~$C^1$-case and~$O(d^{2\alpha})$ in the~$C^{1,\alpha}$-case, we obtain
\begin{equation}
\label{M31r}
\int_{\Oe \setminus \Ot} \partial_{x_j} \Rvp \, \partial_{x_j} \Rvs \, dx = 
	\int_{\Ge} \sigma_{+} \, \partial_{x_j} \vp \, \partial_{x_j}\vs \, dS + o(d)
\end{equation}
in the~$C^1$-case, and with the remainder replaced by~$O(d^{1+\alpha})$ in the~$C^{1,\alpha}$-case.
Therefore
\begin{equation}
\label{M31p}
\int_{\Ot \setminus \Oe} \nabla \Rvp \cdot \nabla \Rvs \, dx = \int_{\Ge} \sigma_{+} \, \nabla\vp \cdot \nabla\vs \, dS + o(d)
\end{equation}
in the~$C^1$-case, and with the remainder replaced by~$O(d^{1+\alpha})$ in the~$C^{1,\alpha}$-case.

Now, applying formulas~(\ref{M31l}) and~(\ref{M31p}) to~(\ref{M28u}), we prove 
Theorems~\ref{i:t:holder_boundary} and~\ref{i:t:C1_boundary}.

\section{Counterexamples}\label{SA4a}

Let~$\Oe$ be the domain in $\mathbf{R}^2$ given by
\[
\Oe = \{ (x,y) \,:\, 0 < x < T,\, 0 < y < R \}.
\]
The domain~$\Ot$ is given by
\[
\Ot = \{ (x,y) \,:\, 0 < x < T, \, d\eta(x/\delta) < y < R \},
\]
where~$\eta$ is a positive, periodic, $C^1$-function such that~$\eta(X+1)=\eta(X)$. 
We assume that~$\delta$ and~$d$ are small parameters, $d \leq \delta$, and that $T/\delta = N$, 
where~$N$ is a large integer. We will consider three cases:~$\delta = d$ is a Lipschitz perturbation, 
if~$\delta = o(d)$ we are dealing with a~$C^1$-perturbation and if~$\delta = d^{1-\alpha}$, 
the perturbation is of the class~$C^{1,\alpha}$.

Consider the eigenvalue problems
\[
-\Delta \vp = \lm \vp \;\; \mbox{in $\Oe$ and $\vp(x,0) = \vp(x,R) = 0$ for $0 < x < T$.}
\]
In addition, we assume that the function is periodic in $x$, i.e.
\[
\vp(0,y) = \vp(T,y) \;\; \mbox{and} \;\; \partial_x \vp(0,y) = \partial_x\vp(T,y) \;\; \mbox{for $0 < y < R$.}
\]
We are interested in a perturbation of the first eigenvalue~$\lm=\lambda_1$. 
Separating the variables, one can easily find that
\[
\lambda_1 = \frac{\pi^2}{R^2} \;\; \mbox{and} \;\; \vp = \varphi_1 = \frac{\sqrt{2}}{\sqrt{RT}}\sin\Big(\frac{\pi}{R}y\Big),
\]
where $\varphi_1$ is a corresponding eigenfunction normalized so that~$\|\varphi_1\|_{L^2(\Oe)} = 1$. 
Clearly,~$\lambda_1$ is a simple eigenvalue.

The perturbed problem is the following:
\[
-\Delta u = \mu u \;\; \mbox{in $\Ot$ and $u(x,d \eta(x/\delta)) = u(x,R)=0$ for $0 < x < T$.}
\]
Furthermore, one must add periodicity conditions on the parts of the boundary where~$x=0$ or~$x=T$.

In this case,~$\Ot \subset \Oe$, and therefore~$\Vp = 0$ and the formula in~(\ref{eq:main_asymp_D}) 
is reduced to
\[
\tau_1 = -\frac{1}{\lambda_1} \int_{\Oe} |\nabla\Phi|^2 \, dx \, dy
\]
and~(\ref{M28s}) becomes
\[
\mu_1^{-1} = \lambda_1^{-1} + \tau_1 + O(d^2).
\]
Here $\Phi = \varphi_1$ in $\Oe \setminus \Ot$ and
\[
\Delta \Phi = 0 \;\; \mbox{in $\Ot$ and} \;\; \Phi(x,R) = 0,\, 
	\Phi(x,d\eta(x/\delta)) = \varphi_1(x,d\eta(x/\delta)),\; 0 < x < T.
\]
Moreover, the periodicity condition must be satisfied on the parts of the boundary where~$x=0$ or~$x=T$. 
Since
\begin{equation}
\label{A3a}
\int_{\Oe} |\nabla\Phi|^2 \, dx\, dy = \int_{\Oe \setminus \Ot} |\nabla\varphi_1|^2 \, dx\, dy 
	+ \int_{\Ot} |\nabla\Phi|^2 \, dx \, dy
\end{equation}
and the first term in the right-hand side corresponds to the Hadamard term in~(\ref{A4a}) and 
has the order~$O(\delta)$, it suffices to analyze the second term in~(\ref{A3a}).

We will construct the function~$\Phi$ in~$\Ot$ using the following representation:
\[
\Phi(x,y) = d w_0(x,y) + d V_0(X,Y) + {\mathcal R}(x,y),
\]
where~$X=x/\delta$ and~$Y=y/\delta$. The function~$w_0$ solves the following boundary value problem in $\Oe$:
\[
\Delta w_0 = 0 \;\; \mbox{in $\Oe$ and} \;\; w_0(x,R) = 0, \;\; w_0(x,0) = c_0,
\]
where~$c_0$ is a constant to be determined later. As usual, periodicity conditions are valid on the vertical
parts of the boundary.
The solution to this problem is given by
\[
w_0 = \frac{c_0(R-y)}{R}.
\]
In order to write the boundary value problem for~$V_0$, we note that
\[
\varphi_1(x,y) = d C_1 \eta(X) + O(d^3) \;\;\; \mbox{when $y=d\eta(x/\delta)$}, \;\;
	C_1 = \frac{\sqrt{2}}{\sqrt{RT}}\frac{\pi }{R}.
\]
The function~$V_0$ is periodic with respect to~$X$ with the period~$1$ and solves the problem
\begin{equation}
\label{A3b}
\Delta_{X,Y} V_0 = 0 \;\;\mbox{in ${\mathcal D}$ and} \;\; V_0(X,d\eta(X)/\delta) = C_1\eta(X)-c_0,
\end{equation}
where
\[
{\mathcal D}=\{ (X,Y) \,:\, \eta(X) < Y < \infty, \, 0 < X < 1 \}.
\]
Furthermore, it is assumed that~$V_0$ decays exponentially as~$Y \to \infty$. 
As it is known, the problem~(\ref{A3b}) has a unique solution~$V_0$ in the class of periodic 
solutions subject to, e.g.,
\[
\int_{{\mathcal D}} |\nabla V_0|^2 \, dX \, dY + \int_0^1 \int_{\eta(X)}^1 |V_0|^2 \, dY \, dX < \infty,
\]
and this solution admits the asymptotic representation
\[
V_0(X,Y) = q + O(e^{-\pi Y}) \;\; \mbox{as $y \to \infty$.}
\]
To calculate the coefficient~$q$, one can introduce a special solution~${\mathcal V}(X,Y)$
which solves the to~(\ref{A3b}) corresponding homogeneous problem and has the asymptotics
\[
{\mathcal V}(X,Y) = Y + c_2 + O(e^{-\pi Y}).
\]
By applying the maximum principle we obtain that~${\mathcal V} > 0$ inside~${\mathcal D}$. 
If~$\delta=d$, then by Hopf's lemma
\[
\partial_{\nu} {\mathcal V}(X,\eta(X)) < 0 \;\; \mbox{for $0 \leq X \leq 1$.}
\]
In the case when~$\delta = o(d)$ ($\delta = O(d^{1-\alpha})$), 
${\mathcal V} = \widetilde{\mathcal V} + o(d)$ (${\mathcal V} = \widetilde{\mathcal V} + O(d^\alpha)$), 
where~$\widetilde{\mathcal V}$ solves the homogeneous problem
\[
\Delta_{X,Y} \widetilde{\mathcal V} = 0 \;\; \mbox{in $\widetilde{\mathcal D}$ and} \;\; 
	\widetilde{\mathcal{V}} (X,0) = 0,
\; \; \mbox{where~$\widetilde{\mathcal D} = (0,1) \times (0,\infty)$.}
\]
By Hopf's lemma,~$\partial_Y \widetilde{\mathcal V}(X,0) > 0$ for~$0 \leq X \leq 1$.
Now, let~$\mathcal{D}_c$ be the domain~$\mathcal{D}$ truncated at~$Y = c$ so that~$\eta(X) < Y < c$. 
Since~$V_0$ and~$\mathcal{V}$ are harmonic in~$\mathcal{D}$, it is clear that
\[
\begin{aligned}
\int_{0}^{1} \mathcal{V}(X, c) \partial_{Y} V_0(X,c) \, dX 
= {} & 
\int_{0}^{1} \partial_{Y} \mathcal{V}(X,c) V_0(X,c) \, dX\\
& + \int_0^1 \partial_\nu {\mathcal V}(X,\eta(X))(C_1\eta(X) - c_0)\sqrt{1+\eta'(X)^2} \, dX.
\end{aligned}
\]
Using the asymptotic representations above and letting~$c \to \infty$,
we obtain that
\[
q = -\int_0^1 \partial_\nu {\mathcal V}(X,\eta(X))(C_1\eta(X)-c_0)\sqrt{1+\eta'(X)^2} \, dX.
\]
The constant~$c_0$ is sought from the relation $q = 0$, which guarantees the exponential decay of the 
function~$V_0$. 

Finally, the remainder~${\mathcal R}$ satisfies
\[
\Delta {\mathcal R} = 0 \;\; \mbox{in $\Ot$,} \;\; {\mathcal R}(x,R) = O(e^{-\pi R/\delta}) \;\; 
	\mbox{and} \;\; {\mathcal R}(x,d\eta(x/\delta)) = O(d^3)
\]
and the differentiation of the boundary conditions will give a factor~$1/\delta$. 
This leads to the estimate
\[
\int_{\Ot}|\nabla {\mathcal R}|^2 \, dx\, dy = O(d^2).
\]
Therefore
\[
\begin{aligned}
\int_{\Ot} |\nabla\Phi|^2 \, dx \, dy = {} & \int_0^T \int_{d\eta(x/\delta)}^R 
	\Big(\Big| \frac{d}{\delta} \partial_{X} V_0(X,Y) \Big |^2+|d (w_0)'_y + \frac{d}{\delta} \partial_{Y} V_0(X,Y)|^2\Big) dy \, dx \\
	& + O(d^2).
\end{aligned}
\]
Since
\[
\begin{aligned}
\int_0^T \int_{d\eta(x/\delta)}^R \Big|\frac{d}{\delta} \partial_X V_0(X,Y)\Big |^2 dy \, dx = {} & 
	T \frac{d^2}{\delta} \int_0^1 \int_{d\eta(X)/\delta}^\infty |\partial_X V_0(X,Y)|^2 dY \, dX\\
	& + O(d^2)
\end{aligned}
\]
and
\[
\int_0^T \int_{d\eta(x/\delta)}^R |d (w_0)'_y + \frac{d}{\delta}\partial_Y V_0(X,Y)|^2 dy \, dx = 
	O\Big( \frac{d^2}{\delta} \Big) + O(d^2),
\]
we see that an additional term in the Hadamard formula will appear if~$\delta=d$ and that
the remainders~$o(d)$ and~$O(d^{1+\alpha})$, respectively, are sharp in
theorems~\ref{i:t:holder_boundary} and~\ref{i:t:C1_boundary}.


\def\bibname{References}

\end{document}